\documentclass[oneside,a4paper,11pt,reqno]{amsart}

\usepackage[T1]{fontenc}

\usepackage[a4paper, scale={0.72,0.74}, marginratio={1:1}, footskip=7mm, headsep=10mm]{geometry}

\usepackage{verbatim}
\usepackage{amsmath} \allowdisplaybreaks
\usepackage{amsfonts}
\usepackage{color}
\usepackage{makecell}
\usepackage{bm}

\allowdisplaybreaks
\usepackage{tikz}
\usetikzlibrary{topaths,fit,backgrounds,shapes.geometric}
\usetikzlibrary{graphs}
\usetikzlibrary{calc}
\usepackage{multirow}
\usepackage{enumerate}
\usepackage{cases}
\usepackage{graphicx}
\usepackage{xcolor}
\usepackage{geometry}
\usepackage{indentfirst}
\usepackage{amsfonts}
\usepackage{mathrsfs}
\usepackage{multicol}
\usepackage{url}
\usepackage{float}
\usepackage{color}
\usepackage{verbatim}
\usepackage{stackengine}
\stackMath
\usepackage[dvipsnames]{xcolor}
\usepackage[normalem]{ulem}
\usepackage{cancel}

\usepackage{fancyhdr}
\usepackage{caption}
\usepackage{mathtools}
\usepackage{dsfont}
\usepackage{graphicx}
\usepackage{esdiff}
\usepackage{physics}
\usepackage{subcaption}
\DeclarePairedDelimiter\nm{\lVert}{\rVert}
\usepackage{nicematrix}

\definecolor{citeorange}{RGB}{200,120,0}
\definecolor{citegreen}{RGB}{0,128,128}

\usepackage{hyperref}
\hypersetup{
  colorlinks=true,
  linkcolor=citegreen,
  citecolor=citeorange,
  urlcolor=
}

\usepackage{amssymb}

\numberwithin{equation}{section}

\newtheorem{theorem}{Theorem}[section]
\newtheorem{proposition}[theorem]{Proposition}
\newtheorem{corollary}[theorem]{Corollary}
\newtheorem{lemma}[theorem]{Lemma}
\theoremstyle{definition}
\newtheorem{Definition}[theorem]{Definition}
\newtheorem{Assumption}[theorem]{Assumption}
\newtheorem{Remark}[theorem]{Remark}
\newtheorem{Example}[theorem]{Example}

\newenvironment{myenumerate}{%
\renewcommand{\theenumi}{(\roman{enumi})}%
\renewcommand{\labelenumi}{\theenumi}%
\begin{list}{\labelenumi}
	{%
	\setlength{\itemsep}{0.4em}%
	\setlength{\topsep}{0.5em}%
	\setlength\leftmargin{2.45em}%
	\setlength\labelwidth{2.05em}%
	\setlength{\labelsep}{0.4em}%
	\usecounter{enumi}%
	}%
	}%
{\end{list}
}
\renewenvironment{enumerate}{
\begin{myenumerate}}%
{\end{myenumerate}}

\newenvironment{myitemize}{%
\begin{list}{$\bullet$}%
 	{%
	\setlength{\itemsep}{0.4em}%
	\setlength{\topsep}{0.5em}%
	\setlength\leftmargin{2.45em}%
	\setlength\labelwidth{2.05em}%
	\setlength{\labelsep}{0.4em}%
	}%
	}%
{\end{list}}
\renewenvironment{itemize}{
\begin{myitemize}}%
{\end{myitemize}}

\def \E {\mathbb{E}}
\def \N  {\mathbb{N}} 

\def \R  {\mathbb{R}}

\renewcommand{\epsilon}{\varepsilon}

\newcommand{\de}{\mathrm{d}}

\newcounter{para}

\usepackage[style=alphabetic, backend=bibtex8, citestyle=alphabetic, firstinits=true, maxnames=99, maxalphanames=3, sorting=nyt]{biblatex}
\DeclareFieldFormat[article, inbook, incollection, inproceedings, patent, thesis, unpublished]{title}{#1} 
\renewbibmacro{in:}{}
\appto{\bibsetup}{\sloppy}

\begin{document}

\title[Non-uniform random hypergraphs]{
Semicircle laws with combined variance \\ for non-uniform Erd\H{o}s--R\'enyi hypergraphs}

\author[L.~Avena]{Luca Avena} 
\address{Dipartimento di Matematica e Informatica ``Ulisse Dini'', Universit\`a degli Studi di Firenze, Viale Morgagni 67/A, 50134, Florence, Italy}
\email{luca.avena@unifi.it}

\author[E.~Bisi]{Elia Bisi}
\address{Dipartimento di Matematica e Informatica ``Ulisse Dini'', Universit\`a degli Studi di Firenze, Viale Morgagni 67/A, 50134, Florence, Italy}
\email{elia.bisi@unifi.it}

\author[E.~Bordiga]{Eleonora Bordiga}
\address{Dipartimento di Matematica e Informatica ``Ulisse Dini'', Universit\`a degli Studi di Firenze, Viale Morgagni 67/A, 50134, Florence, Italy}
\email{eleonora.bordiga@gmail.com}

\begin{abstract}
We consider Erd\H{o}s--R\'enyi-type random hypergraphs that are non-uniform, in the sense that hyperedges of different sizes may coexist, and inhomogeneous, in that connection probabilities may depend on the hyperedge size.
All parameters are allowed to scale with the hypergraph size.
We study the random adjacency matrix whose $(u,v)$-entry counts the number of hyperedges containing both vertices $u$ and $v$, and characterize its expected limiting spectral distribution in terms of the connection probabilities and the hyperedge sizes.
We provide a Pastur-type condition, in the sense of Chatterjee (2005), under which the matrix can be Gaussianized, as well as a more restrictive but simpler sufficient condition in terms of the generalized average degree of the model.
As a second main result, based on such a Gaussianization, we characterize the limiting spectral distributions under non-sparse conditions as semicircle laws with an explicit parametric variance.
The latter can be expressed as a convex combination of the variances arising in the uniform cases, with coefficients determined by the trade-off between the different sources of inhomogeneity.
\end{abstract}
\keywords{Erd\H{o}s--R\'enyi hypergraphs; Non-uniform hypergraphs; Limiting spectral distribution; Semicircle law}
\subjclass[2020]{60B20, 05C80, 05C65, 05C50.}
\maketitle
\tableofcontents

\section{Introduction}

\subsection{Background}

Complex networks provide a mathematical framework for modeling systems {with} many interacting components.
Traditional models are typically based on graphs; however, many real-world systems exhibit interactions that extend beyond pairwise relations.
Such higher-order interactions naturally appear, for instance, in scientific collaboration~\cite{collaboration1}, chemical reactions~\cite{chemistry1, chemistry2, chemistry3}, and ecology~\cite{ecology1}.
This has motivated a growing interest in hypergraph-based models within network science. 

A hypergraph is a set of vertices endowed with a collection of subsets of vertices, called hyperedges.
Vertices represent system constituents, while hyperedges capture multi-body interactions among them.
A hypergraph is said to be uniform if all its hyperedges have the same size, and non-uniform otherwise.
Thus, graphs are uniform hypergraphs with (hyper)edges of size $2$.

Spectral theory provides powerful tools for studying the structure of networks.
In the graph setting, the spectra of connectivity matrices, such as adjacency and Laplacian matrices, encode important geometric information about the underlying structure.
Extending spectral methods from graphs to hypergraphs is currently an active area of research.
Unlike in the graph case, there is no canonical choice of linear algebraic objects associated with hypergraphs.
To uniquely encode the structure of a hypergraph, 
incidence matrices and tensors are commonly considered.
See, e.g.,~\cite{incidence_matrix} for the incidence matrix of general hypergraphs, and~\cite{Cooper1,Cooper2} for adjacency tensors in the uniform case.
A non-uniform version of the adjacency tensor was proposed in~\cite{Banerjee}.
Several other extensions of adjacency and Laplacian matrices have been introduced; see, e.g.,~\cite{Pearson_Zhang,laplacian_tensor}.
However, due to the computational complexity of spectral problems for tensors, which are largely NP-hard~\cite{np_hard_tensors}, matrix-based representations have also been explored, for both adjacency and Laplacian operators~\cite{Laplacian_matrix, Battiston_et_al, Banerjee2, ipergrafiuniformi}.
For example, a common approach is to consider the matrix whose $(u,v)$-entry counts the number of hyperedges containing both vertices $u$ and $v$.
This corresponds to projecting the hypergraph onto a multigraph, where an edge is placed between two vertices for each hyperedge they share.
This matrix is sometimes referred to as the \emph{weighted clique expansion} or \emph{co-occurrence projection matrix}~\cite{hypergraph_projection}.

Random models play a central role in the study of large complex networks.
The classical Erd\H{o}s--R\'enyi random graph has been extensively investigated, ranging from connectivity to spectral properties, and exhibits several fundamental phase transition phenomena.
For results on connectivity phase transitions, we refer the reader to~\cite[Chapter 4]{Van_der_Hofstad}.
For limiting spectral distribution in different regimes, see, e.g.,~\cite{erclassico} for the non-sparse case and~\cite{bauer_and_golinelli, khorunzhy2004eigenvalue, Bordenave_Lelarge_Salez, Salez, Bordenave_Sen_Virag, Coste_and_Salez} for the sparse case.
This problem has also been studied for inhomogeneous Erd\H{o}s--R\'enyi random graphs; see \cite{inhomogeneous_sparse} for the sparse case and \cite{inhomogeneous_non_dense} for the {intermediate (neither sparse nor dense) regime}.
Extensions of random graph models to hypergraphs have been actively studied in recent years; see, e.g.,~\cite{randomhypergraphs3, randomhypergraphs1, ipergrafiuniformi, randomhypergraphs2}.

Most of the existing literature on hypergraphs has focused on the uniform case.
However, this assumption may be restrictive when modeling heterogeneous systems, where interactions of different sizes may coexist.
For this reason, non-uniform hypergraphs provide more realistic and flexible models, as they can better account for inhomogeneities in the network.
In this paper, we study a non-uniform Erd\H{o}s--R\'enyi-type random model, parametrized by several quantities, namely the number of different hyperedge classes, the hyperedge sizes, and the connection probabilities as functions of the size.
This level of generality allows our framework to capture a wide range of random hypergraph structures through suitable choices of the parameters.
For this model, we analyze the asymptotic spectral distribution of the associated random adjacency matrices.

\subsection{Content and organization of the paper}
In Section~\ref{sec_model}, we provide a general framework for the study of non-uniform hypergraphs and their adjacency matrices.
Moreover, we define an Erd\H{o}s--R\'enyi-type random model, in which hyperedges are included independently with probabilities depending on their size.
Both the hyperedge sizes and the connection probabilities may scale with the dimension of the hypergraph.

In Section~\ref{sec_results}, we establish sufficient conditions under which the entries of the adjacency matrices can be replaced by Gaussian random variables (Theorem~\ref{thm_Gaussianization}) without affecting the limiting spectral distribution.
We provide both a Pastur-type condition, in the sense of Chatterjee~\cite{Chatterjee}, and a more restrictive but simpler sufficient condition in terms of the generalized average degree of the model.
As a consequence of this Gaussianization result, the expected empirical spectral distribution converges to a semicircle law (Theorem~\ref{thm_semicircle}).
The variance of the limiting distribution is a convex combination of the variances arising in the uniform cases, with coefficients determined by the joint scaling of hyperedge sizes and connection probabilities.

In Section~\ref{sec_discussion}, we examine in greater detail the case where hyperedges can take only two distinct sizes.
We identify \emph{dominant regimes}, i.e., regimes in which a single class of hyperedges determines the limiting spectral measure, and \emph{balanced regimes}, in which both classes contribute.
In addition, we discuss several consequences and examples of Theorem~\ref{thm_semicircle}.
Corollary~\ref{corollary_linear_sizes} addresses the case in which hyperedge sizes grow linearly with the dimension of the hypergraph.
Corollary~\ref{corollary_fixed_sizes} considers models with fixed hyperedge sizes; in particular, superpositions of classical Erd\H{o}s--R\'enyi random graphs are analyzed in Corollary~\ref{superposition_result}.
In Corollary~\ref{corollary_uniform_hypergraphs}, we recover the results of Ding and Jiang~\cite{erclassico} and Mukherjee et al.~\cite{ipergrafiuniformi} for the special cases of graphs and uniform hypergraphs, respectively.

Finally, Section~\ref{proofs} contains the proofs of the main results.
Inspired by~\cite{ipergrafiuniformi}, we establish the Gaussianization result (Theorem~\ref{thm_Gaussianization}) by adapting Chatterjee's extension of the Lindeberg replacement argument to our setting.
This yields a bound on the distance between the Stieltjes transform of the {expected empirical spectral distribution of the adjacency matrix of the model under consideration} and that of its Gaussianized counterpart (Lemma~\ref{applicazioneChatterjee}).
Using standard probabilistic inequalities, we then derive sufficient conditions ensuring that this error vanishes, namely the Pastur-type and non-sparsity conditions (Propositions~\ref{PasturGaussianization} and~\ref{prop_D1_D2}).
The proof of Theorem~\ref{thm_semicircle}, on the other hand, relies on the representation {used} in~\cite{ipergrafiuniformi}, which expresses the Gaussianized adjacency matrix as a finite-rank perturbation of a GOE matrix.
Our main contribution is to analyze how the different sources of non-uniformity and inhomogeneity in the model influence key asymptotic quantities, such as the conditions ensuring convergence of the {expected empirical spectral distribution} to the semicircle law and the variance of the limiting measure.

The study of the bulk spectrum constitutes a first step toward understanding the spectral properties of non-uniform random hypergraphs.
In this work, we characterize the expected limiting spectral distribution in the non-sparse regime.
Identifying the limiting measure in the sparse case requires different tools, most notably the theory of local weak convergence introduced by Benjamini and Schramm~\cite{Benjamini_Schramm}.
In particular, Bordenave and Lelarge~\cite{Bordenave_Lelarge} established a connection between this framework and the analysis of the limiting spectral distribution of graphs.
For sparse uniform hypergraphs, the identification of the limiting spectral measure via local weak convergence was carried out by Adikhari and Parui \cite{sparseuniform}.
The analysis of the local weak limit of the present non-uniform model is left for future work.
See Remark~\ref{rem:SparseCase} for a more detailed discussion and a review of the relevant literature on this point.

\subsection{Asymptotic notation}

Throughout, we adopt the following asymptotic notation.
For functions $f,g\colon \mathbb{N} \rightarrow \mathbb{R}$, we write: 
\begin{itemize}
   \item $f(n)=O(g(n))$ if there exist $c>0$ and $n_0 \in \mathbb{N}$ such that $|f(n)|\leq c |g(n)|$, $ \forall n \geq n_0$.
   \item $f(n)=\Theta(g(n))$ if $f(n)=O(g(n))$ and $g(n)=O(f(n))$.
   \item $f(n) \ll g(n)$, or $g(n) \gg f(n)$, or $f(n) = o(g(n))$, if $\lim_{n \to \infty} \frac{f(n)}{g(n)}=0.$
   \item $f(n) \sim g(n)$, if 
      $\lim_{n \to \infty} \frac{f(n)}{g(n)}=1$.
   \end{itemize}

\section{Model}
\label{sec_model}

\subsection{Non-uniform hypergraphs and adjacency matrices}

A \emph{hypergraph} of dimension $n$ is a pair $\mathcal{H}=(V,E)$, where $V=\{1,\dots,n\}$ is the \emph{vertex set} and $E \subseteq 2^{V}$ is the \emph{hyperedge set}.
We assume that each hyperedge is a subset of vertices of cardinality at least $2$ (i.e., no self-loops are allowed).
For brevity, we refer to a hyperedge of size $r$ as an \emph{$r$-edge}.
A hypergraph is called $r$-\emph{uniform} if all of its hyperedges have size $r$, for some $r\geq 2$, and \emph{non-uniform} otherwise.
In this sense, classical undirected graphs correspond to $2$-uniform hypergraphs.

Throughout, we assume that $\mathcal{H}$ is a non-uniform hypergraph whose hyperedges can take sizes $r_1,\dots,r_k$, for some $k \in \mathbb{N}$.
One can decompose $\mathcal{H}$ into its uniform components: if $E^{(i)}$ denotes the set of its $r_i$-edges, then $\mathcal{H}^{(i)}=(V,E^{(i)})$ is an $r_i$-uniform subhypergraph on the same vertex set, and $E^{(1)}, \dots, E^{(k)}$ form a partition of $E$.

It is well known that the structure of a graph can be encoded in a matrix, called the \emph{adjacency matrix} of the graph.
However, for hypergraphs, the appropriate objects are higher-order tensors.
Since tensors are often difficult to handle, one typically studies suitable matrix representations obtained via tensor contractions instead.
Mukherjee et al.~\cite{ipergrafiuniformi} studied an adjacency matrix associated with a uniform hypergraph, obtained as a contraction of a tensor and defined as
\begin{equation*}
A_{uv} =
\begin{cases}
   \sum_{e \in E} \mathds{1}(u,v \in e) & \text{if } u \neq v,\\
   0 & \text{otherwise}.
\end{cases}
\end{equation*}
In other words, for $u \neq v$, the entry $A_{uv}$ counts the number of hyperedges in $\mathcal{H}$ containing both vertices $u$ and $v$, while the diagonal entries are zero, as self-loops are not allowed.
A natural way to extend this definition to non-uniform hypergraphs is to define the adjacency matrix of $\mathcal{H}$ as the sum of the adjacency matrices $A^{(1)}, \dots, A^{(k)}$ of its $k$ uniform subhypergraphs, as in~\cite{uniform_decomp_1}:
\begin{equation} \label{adjacency_matrix}
   A \coloneqq  \sum_{i=1}^{k} A^{(i)}.
\end{equation}
For $u \neq v$, the $(u,v)$-entry of this matrix is given by
\begin{align*}
   A_{uv}  = \sum_{e \in E} \mathds{1}(u,v \in e) 
   = \sum_{i=1}^{k} \sum_{e \in E^{(i)}} \mathds{1}(u,v \in e).
\end{align*}
Note that, since $|V| = n$, the matrix $A$ is an $n\times n$ symmetric matrix with non-negative integer entries.
This contrasts with adjacency matrices of simple graphs, whose entries take values in $\{0,1\}$.
Nevertheless, $A$ admits a convenient representation as a linear combination of matrices of this latter type, as we show next.

Consider the \emph{$(r_1,\dots,r_k)$-complete hypergraph}, i.e., the hypergraph that contains all possible hyperedges of sizes $r_1,\dots,r_k$.
The total number of its hyperedges is 
\begin{equation*}
   M=\sum_{i=1}^{k} \binom{n}{r_i}.
\end{equation*}
Let $\{e_1,\dots,e_M\}$ be an enumeration of these hyperedges, ordered so that hyperedges of the same class occupy consecutive positions.
More precisely, define
\begin{equation*}
M_i \coloneqq \sum_{j=1}^{i} \binom{n}{r_j}, \qquad M_0 \coloneqq 0,
\end{equation*}
and suppose that $e_{M_{i-1}+1},\dots,e_{M_i}$ are the $r_i$-edges.

Now consider again an arbitrary hypergraph $\mathcal{H}$, decomposed into its $k$ uniform components $\mathcal{H}^{(1)},\dots,\mathcal{H}^{(k)}$.
Then, the adjacency matrix of $\mathcal{H}^{(i)}$ can be written, as in~\cite{ipergrafiuniformi}, as
\begin{equation} \label{linear_comb}
    A^{(i)} = \sum_{l=M_{i-1}+1}^{M_i}h_lQ_l, \qquad 1\leq i\leq k,
 \end{equation}
where 
\begin{itemize}
   \item $h_l \coloneqq \mathds{1}(e_l \in E)$ indicates whether the hyperedge $e_l$ is present in $\mathcal{H}$ (if $\mathcal{H}$ is an $(r_1,\dots,r_k)$-complete hypergraph, then $h_l \equiv 1$ for all $l$);
   \item $Q_l$ is the $\{0,1\}$-matrix of size $n$, with entries $(Q_l)_{uv} \coloneqq \mathds{1}(u,v \in e_l)$, encoding which vertices belong to $e_l$.
\end{itemize}
By~\eqref{linear_comb}, the full adjacency matrix~\eqref{adjacency_matrix} can thus be written as
\begin{equation} \label{adjacency_matrix_representation}
    A = \sum_{l=1}^{M}h_lQ_l.
\end{equation}

We emphasize that the above considerations apply in both deterministic and random settings. However, as we will shortly see, the representation~\eqref{adjacency_matrix_representation} is particularly useful for Erd\H{o}s--R\'enyi-type hypergraphs, where the matrices $Q_l$ are deterministic, while all randomness is captured by the coefficients $h_l$.

\subsection{Non-uniform inhomogeneous Erd\H{o}s--R\'enyi hypergraphs}

We consider an Erd\H{o}s--R\'enyi-type model, in which each hyperedge of the $(r_1,\dots,r_k)$-complete hypergraph is included independently, with probability depending on its size:
\begin{equation} \label{Bernoulli}
    h_{l} \sim \mathrm{Ber}(p_i) \qquad \text{if }  |e_l| = r_i, \qquad 1\leq l\leq M.
\end{equation}
Namely, for $1 \leq i \leq k$, each $r_i$-edge is present with probability $p_i$; since, in general, $p_1,\dots,p_k$ are distinct, this introduces a form of inhomogeneity in the random model, in addition to its non-uniformity.
Additionally, we allow both the connection probabilities $p_i=p_i(n)$ and the hyperedges sizes $r_i=r_i(n)$ to depend on $n$.
However, for the sake of simplicity, we assume that $k$ is fixed and does not depend on $n$.

\begin{Definition}[$({n,}\bm{r},\bm{p})$-Erd\H{o}s--R\'enyi hypergraph] \label{our_model}
{Fix $k\in \N$.
Let $\bm{r}=(r_i)_{i=1}^{k}\in \N^k$ and $\bm{p}=(p_i)_{i=1}^{k}\in [0,1]^k$, both possibly depending on $n$.}
We refer to the stochastic model introduced above as the \emph{$(n,\bm{r},\bm{p})$-Erd\H{o}s--R\'enyi hypergraph} and we denote it by $\mathcal{H}(n,\bm{r},\bm{p})$.
\end{Definition}

The adjacency matrix $A$ of $\mathcal{H}(n,\bm{r},\bm{p})$ is a symmetric random matrix, whose off-diagonal entries are identically distributed.
Indeed, for all $i \in \{1,\dots,k\}$, any two distinct vertices $u$ and $v$ can belong to $\binom{n-2}{r_i-2}$ possible $r_i$-edges, each of which is included independently in the random hypergraph with probability $p_i$, so that
\begin{equation*}
    A_{uv}^{(i)} \sim \mathrm{Bin}\!\left(\binom{n-2}{r_i-2},\,p_i \right), \qquad u\neq v.
\end{equation*}
Therefore, by (\ref{adjacency_matrix}), each off-diagonal entry of the full adjacency matrix $A$ is a sum of $k$ independent binomial random variables {with the parameters specified above}.
Note that, if the $p_i$ are distinct, $A_{uv}$ does not itself follow a binomial distribution.

The mean and variance of each off-diagonal entry are given by
\begin{equation}
    \mu \coloneqq
    \mathbb{E}\!\left[A_{uv} \right]
    = \sum_{i=1}^{k} \binom{n-2}{r_i-2}p_i, 
    \qquad
    \sigma^2 \coloneqq 
    \mathrm{Var}[A_{uv}]
    = \sum_{i=1}^{k} \binom{n-2}{r_i-2}\sigma_{i}^2,
    \qquad  u \neq v, \label{sigmasquare}
\end{equation}
where 
\begin{equation} \label{sigma_i^2}
   \sigma_i^2 \coloneqq p_i(1-p_i), \qquad 1 \leq i \leq k,
\end{equation}
are the variances of the Bernoulli random variables in~\eqref{Bernoulli}.

A remarkable feature of the adjacency matrix $A$ is that its entries are not independent (unless $r_i=2$ for all $1 \leq i \leq k$), in contrast to usual Wigner matrices:
\begin{equation} \label{covariance}
    \mathrm{Cov}(A_{uv},A_{u'v'})=
\begin{cases}
   \displaystyle \sum_{i=1}^{k} \binom{n-4}{r_i-4}\sigma_i^2 & \text{if } |\{u,v,u',v'\}|=4,\\[15pt]
   \displaystyle \sum_{i=1}^{k} \binom{n-3}{r_i-3}\sigma_i^2 & \text{if } |\{u,v,u',v'\}|=3.
\end{cases}
\end{equation}
Such random matrices are sometimes referred to as \emph{dependent} (or \emph{correlated}) \emph{Wigner matrices}.

We now describe some illustrative submodels that are encompassed by our framework.

\begin{Example}[Homogeneous case]
In the homogeneous one, all connection probabilities are taken to be equal, i.e., $\bm{p}=(p,\dots,p)$.
In this case, $h_l \overset{\text{i.i.d.}}{\sim} \mathrm{Ber}(p)$ and $A_{uv} \sim \mathrm{Bin}\!\left(\sum_{i=1}^{k}\binom{n-2}{r_i-2},\,p \right)$, $u \neq v$.
\end{Example}

\begin{Example}[Graphs and uniform hypergraphs]
When $k=1$, the model allows a single {hyperedge} size $r$ and a single connection probability $p$.
In this case, we recover the $r$-uniform random hypergraph model $\mathcal{H}(n,r,p)$.
In particular, $\mathcal{H}(n,2,p)$ coincides with the classical Erd\H{o}s--R\'enyi random graph $\mathcal{G}(n,p)$.
\end{Example}

\begin{Example}[Superposition of Erd\H{o}s--R\'enyi random graphs] \label{def_superposition}
We now consider the degenerate case in which all (hyper)edge sizes {are equal to $2$}, i.e., $\bm{r}=(2,\dots,2)$.
This choice yields a \emph{superposition of $k$ Erd\H{o}s--R\'enyi random graphs}.
This corresponds to a random multigraph in which each pair of vertices may be connected by at most $k$ edges (see Figure~\ref{fig:superposition}).
The off-diagonal entries of the adjacency matrix are given by a sum of $k$ independent Bernoulli random variables:
    \begin{equation*}
        A_{uv} = \sum_{i=1}^{k} A_{uv}^{(i)} \in \{0,1,\dots,k\}, \qquad  u \neq v,
    \end{equation*}
where $A_{uv}^{(i)} \sim \mathrm{Ber}(p_i)$.
In the homogeneous case, when $p_i=p$ for all $i$, we have $A_{uv} \sim \mathrm{Bin}(k,p)$.
\end{Example}

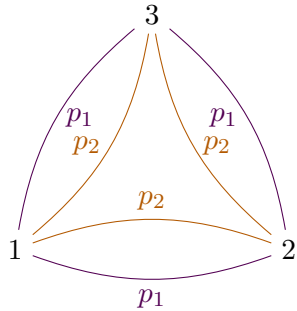
\begin{figure}
\begin{center}
\begin{tikzpicture}[scale=1.8]

  \node (1) at (0,0) {$1$};
  \node (2) at (2,0) {$2$};
  \node (3) at (1,1.732) {$3$};

  \draw[orange!70!black] (1) to[bend left=20] node[midway, above] {$p_2$} (2);
  \draw[violet!65!black] (1) to[bend right=20] node[midway, below] {$p_1$} (2);

  \draw[orange!70!black] (2) to[bend left=20] node[midway, right] {$p_2$} (3);
  \draw[violet!65!black] (2) to[bend right=20] node[midway, left] {$p_1$} (3);

  \draw[orange!70!black] (3) to[bend left=20] node[midway, left] {$p_2$} (1);
  \draw[violet!65!black] (3) to[bend right=20] node[midway, right] {$p_1$} (1);

\end{tikzpicture}
\end{center}
\caption{Superposition of two classical Erd\H{o}s--R\'enyi random graphs ($k=2$) on 3 vertices.}
\label{fig:superposition}
\end{figure}

\subsection{Normalization of the adjacency matrix}
It is standard to analyze the spectrum of a normalized version of the adjacency matrix.
We follow this approach by centering $A$ (i.e., subtracting its mean matrix $\mathbb{E}\!\left[A \right]$) and scaling by the factor $\sqrt{n\sigma^2}$.
In other words, we consider
\begin{equation} \label{def_H}
     H_n \coloneqq \frac{A-\mathbb{E}\!\left[A \right]}{\sqrt{n\sigma^2}}.
\end{equation}
By construction, we have $\mathbb{E}\!\left[(H_n)_{uv} \right]=0$ and $\mathrm{Var}[(H_n)_{uv}]=1/n$, for $u \neq v$, while $(H_n)_{uu} = 0$ for $1 \leq u \leq n$.
Many properties of the original adjacency matrix $A$ are inherited by $H_n$, such as its symmetry and the fact that the entries are identically distributed but not independent.

\begin{Remark} \label{EESD_second_moments}
The normalization~\eqref{def_H} ensures that the second moment of $H_n$ remains bounded and non-vanishing.
Indeed, the second moment of any random real symmetric matrix $B$ is given by
\begin{equation*}
   m_2(B)  
   = \frac{1}{n} \mathbb{E}\!\left[\mathrm{Tr}B^2\right]
   = \frac{1}{n} \sum_{1 \leq u, v \leq n}\mathbb{E}\!\left[B_{uv}^2\right].
\end{equation*}
Applying this equality to $B=A-\mathbb{E}\!\left[A \right]$, we obtain
\begin{align*}
   m_2(A-\mathbb{E}\!\left[A \right]) 
   = \frac{1}{n} \sum_{1 \leq u,v \leq n} \mathrm{Var}[A_{uv}].
\end{align*}
The diagonal entries of $A$ are zero, while its off-diagonal entries all have the same variance $\sigma^2$ (see~\eqref{sigmasquare}), so that
\begin{equation*}
   m_2(H_n) 
   = \frac{n-1}{n}.
\end{equation*}
\end{Remark}

\begin{Remark}[Complete and empty hypergraphs]
For trivial parameter choices, namely when \emph{all} the parameters $p_1,\dots,p_k$ are either $0$ or $1$, the scaled matrix $H_n$ is not defined, as $\sigma^2=0$.
In these cases $A$ is deterministic and it equals a non-negative multiple of the adjacency matrix of the complete graph on $n$ vertices.
Henceforth, these cases will be excluded.
\end{Remark}

Notice that, by the representation of $A$ in~\eqref{adjacency_matrix_representation}, we have
\begin{equation*}
   A-\mathbb{E}\!\left[A \right]
   = \sum_{l=1}^M X_lQ_l,
 \end{equation*}
where $\bm{X}:=(X_1,\dots,X_M)$ is the vector with components
\begin{equation} \label{X}
   X_l \coloneqq h_l-\mathbb{E}\!\left[h_l \right].
\end{equation}
Note that these random variables satisfy the following properties:
\begin{itemize}
    \item $X_1,\dots,X_M$ are independent;
    \item $\mathbb{E}\!\left[X_l \right]=0$, for any $1 \leq l \leq M$;
    \item if $|e_l| = r_i$, then $\mathrm{Var}[X_l]= \sigma_{i}^2$ (as defined in~\eqref{sigma_i^2}).
\end{itemize}
In the following, we often write $H_n(\bm{X})$ to emphasize that $H_n$ can be viewed as a function of the random vector $\bm{X}$.

\section{Main results}
\label{sec_results}

\subsection{Notation}
For a random real symmetric $n\times n$ matrix $B_n$ with (real) eigenvalues $\lambda_1, \dots, \lambda_n$, we denote by $\mu_{B_n}$ its \emph{empirical spectral distribution} (ESD), i.e., the random probability measure that assigns mass to the eigenvalues proportionally to their relative frequencies:
\begin{equation*}
   \mu_{B_n} \coloneqq \frac{1}{n} \sum_{i=1}^n \delta_{\lambda_i},
\end{equation*}
where $\delta_{\lambda}$ denotes the Dirac measure at $\lambda \in \mathbb{R}$. 

We denote by $\bar{\mu}_{B_n}$ the \emph{expected empirical spectral distribution} (EESD) of $B_n$, i.e., the deterministic probability measure defined via its action on bounded continuous test functions as
\begin{equation*}
   \int_{\mathbb{R}} f(x) \, d\bar{\mu}_{B_n}(x)
   \coloneqq \mathbb{E}\!\left[ \int_{\mathbb{R}} f(x) \, d\mu_{B_n}(x) \right].
\end{equation*}

If, as $n \to \infty$, $\bar\mu_{B_n}$ converges weakly, we refer to the limit as the \emph{expected limiting spectral distribution} (ELSD) of $B_n$.
In our setting, the ELSD will always be a \emph{semicircle law with variance $s^2$}, i.e., the probability measure $\nu_{s^2}$ on $\mathbb{R}$ with density
\begin{equation*}
   \frac{1}{2\pi s^2}\sqrt{4 s^2-x^2}\,
   \mathds{1}({|x|\le 2s}).
\end{equation*}

\subsection{Overview of the results}
Our main object of interest is the bulk of the spectrum of $H_n(\bm{X})$ in the large-$n$ regime, where $H_n$ is the centered and scaled adjacency matrix, defined in~\eqref{def_H}, of the Erd\H{o}s--R\'enyi hypergraph $\mathcal{H}(n,\bm{r},\bm{p})$.
More specifically, our goal is to identify sufficient conditions under which the EESD of this ensemble converges to a semicircle law, and to determine its variance in terms of the model parameters $(\bm{r},\bm{p})$.
We proceed in two steps:
\begin{enumerate}
    \item \textbf{Gaussianization}: we identify sufficient conditions under which the vector $\bm{X}$ can be replaced by a Gaussian vector $\bm{Z}$ with the same mean and covariance matrix, with a negligible perturbation of the EESD of $H_n(\bm{X})$.
    \item \textbf{Convergence to the semicircle law in the Gaussian case}: we study the ELSD of the ensemble $H_n$ after replacing the random variables of our model with Gaussian ones.
    The advantage is that the asymptotic analysis becomes more tractable in the Gaussian setting, as the random matrix ensemble can be reduced to a Gaussian Orthogonal Ensemble (GOE), allowing us to exploit known results.
    The limiting variance is determined by the covariance structure of $H_n$, and thus depends on the model parameters.
\end{enumerate}

\subsection{Convergence to the semicircle law}
We begin by introducing the assumptions on the model parameters for the convergence of the EESD of $H_n(\textbf{X})$ to the semicircle law. 
The following regularity condition will be assumed throughout.
\begin{Assumption}[Regularity condition] \label{regularity_condition}
We assume that the following limits exist:
\begin{equation} \label{weights}
    w_i \coloneqq \lim_{n \to \infty} \frac{\binom{n-2}{r_i-2}\sigma_i^2}{\sum_{j=1}^{k}\binom{n-2}{r_j-2}\sigma_j^2}, \qquad 1\leq i\leq k.
\end{equation}
\end{Assumption}

For each $1 \leq i \leq k$, we introduce the following quantity, which, in analogy with graphs and uniform hypergraphs~\cite{ipergrafiuniformi}, can be interpreted as the \emph{average $r_i$-degree}, i.e., the average number of $r_i$-edges containing a given vertex:
\begin{equation} \label{defaveragedegrees}
    d_i \coloneqq \binom{n-1}{r_i-1} \, p_i, \qquad 1 \leq i \leq k.
\end{equation}
Indeed, any given vertex can belong to $\binom{n-1}{r_i-1}$ possible hyperedges of size $r_i$, each of which is included independently with probability $p_i$.
We work under the following assumption on the average degrees.
\begin{Assumption}[Non-sparsity condition] \label{non_sparsity_condition}
    Let $r_{\max} \coloneqq \max\{r_i \colon 1 \leq i \leq k\}$ and
    \begin{equation} \label{xi}
        \xi \coloneqq \sum_{i=1}^{k} w_i \frac{1}{r_i^2},
    \end{equation}
    where $w_1,\dots,w_k$ are defined in \eqref{weights}.
    We assume that the following non-sparsity condition holds:
    \begin{equation}
        \label{D}
         \frac{a_n}{b_n c_n}
        \rightarrow \infty,
    \end{equation}
    where 
    \begin{equation*}
        a_n = \left(\sum_{i=1}^{k} r_i d_i\right)^2, 
        \qquad b_n = r_{\mathrm{max}}^{16}\xi^2, 
        \qquad c_n = \sum_{i=1}^{k} \frac{d_i}{r_i}.
    \end{equation*}
\end{Assumption}
Notice that, in general, $\xi$ depends on $n$, as the hyperedge sizes $r_i$ do.

\begin{Remark}[Hyperedges of the same magnitude and non-sparsity condition] \label{same_magnitude_condition}
Suppose that $r_1,\dots,r_k$ have the same order of magnitude, i.e., there exists $r=r(n)$ such that $r_i = \Theta(r)$ for $1 \leq i \leq k$.
In this regime, we have $a_n=\Theta(r d_{\mathrm{tot}})^2$, $b_n = \Theta(r^{12})$, and $c_n = \Theta(d_{\mathrm{tot}}/r)$, where $d_{\mathrm{tot}} \coloneqq \sum_{i=1}^{k} d_i$ denotes the \emph{average total degree} of any vertex in the hypergraph.
Therefore, condition~\eqref{D} is equivalent to
\begin{equation*}
\frac{d_{\mathrm{tot}}}{r^9} \to \infty.
\end{equation*}
We deduce that, in the non-uniform setting with hyperedge sizes of the same magnitude, Assumption~\ref{non_sparsity_condition} holds if and only if there exists $i \in \{1,\dots,k\}$ such that 
\begin{equation*}
\frac{d_i}{r_i^{9}} \to \infty.
\end{equation*}
Notably, {for $k=1$}, this coincides with the non-sparsity condition for the convergence to the semicircle law obtained in~\cite{ipergrafiuniformi} for uniform hypergraphs.
\end{Remark}

We are now ready to state our main result. 

\begin{theorem}[Convergence to the semicircle law] \label{thm_semicircle}
Consider the Erd\H{o}s--R\'enyi hypergraph $\mathcal{H}(n,\bm{r},\bm{p})$ as in Definition \ref{our_model}.
For each $1 \leq i \leq k$, assume that $r_i/n \rightarrow c_i \in [0,1)$ and $1-p_i=\Theta(1)$.
Suppose moreover that the non-sparsity condition~\eqref{D} holds.
Then
\begin{equation*}
        \bar{\mu}_{H_n(\bm{X})} \xrightarrow{n\to\infty} \nu_{s^2} \qquad\text{weakly,}
\end{equation*}
where $\nu_{s^2}$ is the semicircle law with variance 
\begin{equation} \label{variance_semicircle}
s^2 = \sum_{i=1}^{k} w_i(1-c_i)^2,
\end{equation}
and $w_1,\dots,w_k$ are defined in~\eqref{weights}.
\end{theorem}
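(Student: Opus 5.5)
The proof has two stages: Gaussianization, then identification of the Gaussian limit. In the first stage I would invoke the Gaussianization result (Theorem~\ref{thm_Gaussianization}, in its simpler form in terms of generalized average degrees, Proposition~\ref{prop_D1_D2}). This says that under the non-sparsity condition~\eqref{D}, the Stieltjes transforms of $\bar\mu_{H_n(\bm X)}$ and $\bar\mu_{H_n(\bm Z)}$ differ by $o(1)$ for each fixed $z\in\mathbb C^+$. Here $\bm Z$ is a centered Gaussian vector with independent components $Z_l\sim N(0,\sigma_i^2)$ for $|e_l|=r_i$. Because pointwise convergence of Stieltjes transforms on $\mathbb C^+$ implies weak convergence, it suffices to show that $\bar\mu_{H_n(\bm Z)}\to\nu_{s^2}$. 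The assumption $1-p_i=\Theta(1)$ guarantees $\sigma_i^2\asymp p_i$, so that $d_i$ controls the variances as required in that proposition.

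For the Gaussian matrix I would follow the decomposition of~\cite{ipergrafiuniformi}, applied class by class. For $|e_l|=r_i$ we have $Q_l=\mathbf 1_{e_l}\mathbf 1_{e_l}^\top-\mathrm{diag}(\mathbf 1_{e_l})$. Set $G_i=\sum_{|e_l|=r_i}Z_lQ_l$, so that $G_i$ is a Gaussian matrix with exchangeable covariance given by~\eqref{covariance}. I would write $Z_l$ in terms of a Gaussian field indexed by pairs: each off-diagonal entry $(G_i)_{uv}$ is expanded in the orthogonal Hoeffding/Johnson-scheme decomposition as
\[
(G_i)_{uv}=\alpha_i W^{(i)}_{uv}+\beta_i\big(Y^{(i)}_u+Y^{(i)}_v\big)+\gamma_i\,T^{(i)},
\]
where $W^{(i)}$ is GOE-like with i.i.d.\ entries, $Y^{(i)}$ is an independent Gaussian vector, and $T^{(i)}$ is a scalar. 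The variances are read off from~\eqref{covariance}. The parts coming from $Y^{(i)}$ and $T^{(i)}$ together with the diagonal have rank $O(1)$, or are diagonal with controlled norm. By the rank inequality and Hoffman--Wielandt type bounds, they do not affect the limit, so after normalization by $\sqrt{n\sigma^2}$ only the GOE components survive. The variance of the GOE component of class $i$ equals the variance minus the rank-one covariance contributions. Asymptotically this is
\[
\binom{n-2}{r_i-2}\sigma_i^2-2(n-2)\binom{n-3}{r_i-3}\sigma_i^2+\binom{n-2}{2}\binom{n-4}{r_i-4}\sigma_i^2\cdot\frac{4}{n^2}\cdots,
\]
and a direct computation, using $r_i/n\to c_i$, should give $\binom{n-2}{r_i-2}\sigma_i^2(1-c_i)^2(1+o(1))$. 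This reproduces the uniform-case variance in~\cite{ipergrafiuniformi}.

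The classes are built from disjoint sets of independent $Z_l$, so the GOE parts $W^{(1)},\dots,W^{(k)}$ are independent. Their sum is therefore again a GOE matrix, with entry variance $\sum_i\alpha_i^2$. After dividing by $n\sigma^2$, the total is $\frac1n\sum_i w_i(1-c_i)^2+o(1/n)$ by~\eqref{weights}. Wigner's theorem for the GOE then yields $\nu_{s^2}$ with $s^2$ as in~\eqref{variance_semicircle}.

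The main obstacle is the perturbation step. The finite-rank part must be handled carefully, and the diagonal terms $\sum_l Z_l\,\mathrm{diag}(\mathbf 1_{e_l})$ together with the $\beta_i(Y_u+Y_v)$ terms need $o(1)$ normalized Frobenius norm, or at least must be of the form diagonal plus rank two. This has to hold uniformly in the regime $c_i>0$, where $r_i$ grows linearly. I would use the exact covariance formulas~\eqref{covariance} and the L\'evy-distance bound $L(\mu_A,\mu_B)^3\le\frac1n\operatorname{Tr}(A-B)^2$ for the diagonal part. For the rank-two parts I would use the rank inequality $\|F_A-F_B\|_\infty\le\operatorname{rank}(A-B)/n$.
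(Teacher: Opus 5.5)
Your plan follows the paper's route. You Gaussianize via Theorem~\ref{thm_Gaussianization}(ii), represent the Gaussian matrix as in~\cite{ipergrafiuniformi} as a diagonal-free GOE plus finite-rank and small diagonal terms, and then compute the variance. Splitting by hyperedge class is equivalent to the paper's single decomposition of $H_n(\bm Z)$ with the combined covariances $\gamma_n,\rho_n$. By linearity, $\sum_i\beta_iY^{(i)}$ and $\sum_i\tau_iT^{(i)}$ recombine into one perturbation of rank at most three, and the independent GOE parts add up to a single GOE. Here $\tau_i$ is your $\gamma_i$, renamed to avoid a clash with the paper's $\gamma_n$.

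Two points need fixing.

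\emph{The displayed variance formula is wrong.} This is the one computation the proof actually requires. Match the variance and the two covariances in~\eqref{covariance}, and write $B_i=\binom{n-2}{r_i-2}\sigma_i^2$ as in~\eqref{shorthand}. You get $\tau_i^2=\binom{n-4}{r_i-4}\sigma_i^2$, $\beta_i^2=\binom{n-3}{r_i-3}\sigma_i^2-\tau_i^2$, and
\[
\alpha_i^2=\binom{n-2}{r_i-2}\sigma_i^2-2\binom{n-3}{r_i-3}\sigma_i^2+\binom{n-4}{r_i-4}\sigma_i^2
= B_i\,\frac{(n-r_i)(n-r_i-1)}{(n-2)(n-3)}.
\]
There are no factors $(n-2)$ or $\binom{n-2}{2}\frac{4}{n^2}$. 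As you wrote it, the middle term equals $2(r_i-2)B_i$, so the expression is negative for $r_i\ge 3$.

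The nonnegativity of $\alpha_i^2$ and $\beta_i^2$ is exactly what makes the representation legitimate. It must be read as an equality in law, not as a literal Johnson-scheme projection, whose pair component has zero row sums rather than i.i.d.\ entries. With the correct formula, $\sum_i\alpha_i^2/(n\sigma^2)=(1-2\gamma_n+\rho_n)/n=\theta_n^2/n$, and $\theta_n^2\to\sum_i w_i(1-c_i)^2$, as in the paper.

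\emph{Drop the requirement on $\sum_l Z_l\operatorname{diag}(\mathbf 1_{e_l})$.} You ask that it have $o(1)$ normalized Frobenius norm. For bounded $r_i$ its normalized second moment tends to $w_i/(r_i-1)$, so the claim fails. It is also unnecessary, because $Q_l$ already has zero diagonal. Represent the off-diagonal Gaussian vector of $G_i$ directly. The only diagonal to remove is then $\operatorname{diag}(2\beta_iY^{(i)}_u+\tau_iT^{(i)})$, plus the diagonal of $W^{(i)}$ if you start from a full GOE. After dividing by $\sqrt{n\sigma^2}$ its normalized second moment is $O(1/n)$, so the L\'evy-distance bound disposes of it.
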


\begin{Remark}[Loss of variance of the semicircle law]
It is worth noting that, whenever $c_i$ and $w_i$ are both positive, for some $1 \leq i \leq k$, the variance of the limiting semicircle law is strictly smaller than $1$, even though the second moment of the EESD of $H_n$ converges to $1$ (see Remark~\ref{EESD_second_moments}).
This discrepancy indicates that the method of moments cannot be applied in this case.
A likely explanation for this phenomenon is the presence of a small number of outlier eigenvalues that contribute non-negligibly to the second moment while not affecting the ELSD.
Such outliers may arise from the covariance structure of $H_n$, in analogy with phenomena observed for finite-rank perturbations of random matrices, known as \emph{Baik--Ben Arous--Péché phase transitions}~\cite{BBP}.
In the uniform case, the presence of outliers is investigated in~\cite{ipergrafiuniformi}.
\end{Remark}

\subsection{Gaussianization}
Theorem~\ref{thm_semicircle}, concerning convergence to the semicircle law, relies on an intermediate result, which we refer to as the Gaussianization step.
We first formalize the notion of \emph{Gaussianization} in this context.
Recall that, given a probability measure $\mu$ on $\mathbb{R}$, its \emph{Stieltjes transform} is the analytic function $S_{\mu}\colon \mathbb{C}_+ \rightarrow \mathbb{C}_+$ defined by
\begin{equation*}
   S_\mu(z) \coloneqq \int_{\mathbb{R}} \frac{1}{x-z} \, \de \mu(x),
   \qquad z \in \mathbb{C}_+,
\end{equation*}
where $\mathbb{C}_+ \coloneqq \{ z \in \mathbb{C} \colon \operatorname{Im} z > 0 \}$. 

\begin{Definition}[$\bm{Z}$-Gaussianizable model]
Let $\bm{Z}=(Z_1,\dots,Z_M)$ be a vector of independent Gaussian components, and let $\bm{X}$ be the random vector defined in~\eqref{X}.
We say that $\bm{X}$ is \emph{$\bm{Z}$-Gaussianizable} if
\begin{equation*}    
    \lim_{n\to\infty} \biggl|
    S_{\bar{\mu}_{H_n(\bm{X})}}(z)-S_{\bar{\mu}_{H_n(\bm{Z})}}(z) 
    \biggr|=0
    \qquad \text{for any } z \in \mathbb{C}_+.
\end{equation*}
\end{Definition}

A well-known fact is that weak convergence of measures is equivalent to pointwise convergence of their Stieltjes transforms on $\mathbb{C}_+$.
Therefore, for Gaussianizable models, the expected limiting spectral distribution, provided it exists, is the same for both the original model and its Gaussianized counterpart. 

We now introduce the main working assumption under which, as will be stated in Theorem~\ref{thm_Gaussianization}, our model is Gaussianizable.
We refer to this as the \emph{Pastur-type condition}, as it is analogous to a condition for convergence to the standard semicircle law established by Pastur~\cite{Pastur} in the context of Wigner matrices.

\begin{Assumption}[Pastur-type condition] \label{Pasturconditions}
{Let $\bm{X}$ be the random vector associated with the hypergraph model, as defined in~\eqref{X}.
Let $\bm{Z}=(Z_1,\dots,Z_M)$ be a vector of independent Gaussian random variables, where each $Z_l$ has the same mean and variance of the corresponding $X_l$, i.e.,
\begin{equation} \label{Z}
        Z_l \sim \mathcal{N}(0,\sigma_i^2) \qquad \text{if } |e_l| = r_i.
\end{equation}}
\begin{enumerate}
\item We assume that $\bm{X}$ satisfies the following Pastur-type condition: for every $\epsilon > 0$,
\begin{equation}
\label{P}
    \sum_{l=1}^{M} \mathbb{E}\!\left[
            X_l^2 \,\mathds{1}\!\left(|X_l| > \epsilon K_n\right)
        \right]
        = o\!\left(
            \frac{n^2 \sigma^2}{r_{\max}^4}
        \right), \qquad \text{as } n \to \infty,
\end{equation}
where $r_{\max} \coloneqq \max\{r_i \colon 1 \leq i \leq k\}$, and
\begin{equation} \label{K_n}
    K_n \coloneqq \frac{\sqrt{n \sigma^2}}{r_{\max}^6 \,\xi},
\end{equation}
with $\xi$ defined in \eqref{xi}.
\item We assume that $\bm{Z}$ also satisfies the Pastur-type condition~\eqref{P}, in the sense that~\eqref{P} holds with each $X_l$ replaced by $Z_l$.
\end{enumerate}
\end{Assumption}

\begin{Remark}[Tail conditions for Gaussianization]
Since the vector $\bm{X}$ consists of $k$ groups of i.i.d.\ random variables, condition~\eqref{P} is equivalent to $k$ tail conditions, one for each group.
\end{Remark}

\begin{Remark}[Uniform hypergraphs and the Pastur-type condition]
In the $r_1$-uniform case ($k=1$), we have $\sigma^2 = \binom{{n-2}}{r_1-2}\sigma_1^2$, $r_{\mathrm{max}}=r_1$, and $w_1=1$, which yields $\xi=1/r_1^2$ and $K_n = \sqrt{n\sigma^2}/r_1^4$.
Therefore, condition~\eqref{P} coincides with the Pastur-type condition established in~\cite[{Assumption~2.1}]{ipergrafiuniformi} for uniform hypergraphs.
\end{Remark}

{We now state our Gaussianization result under the Pastur-type condition and, as a special case, under the previously introduced non-sparsity condition, which is more restrictive but easier to check.}

\begin{theorem}[Gaussianization] \label{thm_Gaussianization}
{Let $\bm{X}$ be the random vector associated with the hypergraph model, as defined in~\eqref{X}, and let $\bm{Z}$ be a random vector with independent Gaussian components, as defined in~\eqref{Z}.}
\begin{enumerate}
   \item Assume that both $\bm{X}$ and $\bm{Z}$ satisfy the Pastur-type condition~\eqref{P}.
   Then, $\bm{X}$ is $\bm{Z}$-Gaussianizable.
    \item Assume that $1-p_i=\Theta(1)$, for $1 \leq i \leq k$.
    Suppose moreover that the non-sparsity condition~\eqref{D} holds.
    Then both $\bm{X}$ and $\bm{Z}$ satisfy the Pastur-type condition~\eqref{P}; consequently, $\bm{X}$ is $\bm{Z}$-Gaussianizable. 
    \end{enumerate}
\end{theorem}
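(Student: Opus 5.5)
For part (i), I plan to use Chatterjee's generalized Lindeberg replacement (2005), applied to the function $F(\bm{x}) = \frac1n \operatorname{Tr}(H_n(\bm{x})-zI)^{-1}$ for fixed $z\in\mathbb{C}_+$. Here $H_n(\bm{x}) = (n\sigma^2)^{-1/2}\sum_l x_l Q_l$. Chatterjee's theorem bounds $|\mathbb{E}F(\bm X)-\mathbb{E}F(\bm Z)|$ by two contributions. The first is a term involving $\lambda_2(F)\sum_l \mathbb{E}[X_l^2\mathds{1}(|X_l|>K)]$ and the same with $Z_l$. The second is a term $\lambda_3(F)\sum_l \mathbb{E}[|X_l|^3\mathds{1}(|X_l|\le K)]\le \lambda_3(F)\,K\sum_l\sigma_{i(l)}^2$, plus the Gaussian analogue. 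Here $\lambda_j(F)$ bounds the $j$-th partial derivatives of $F$ in a single coordinate.

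To get the $\lambda_j$, I will differentiate the resolvent $G=(H_n-z)^{-1}$ in direction $Q_l$: $\partial_l G = -(n\sigma^2)^{-1/2} G Q_l G$, and similarly for higher derivatives. Since $Q_l = \mathbf{1}_{e_l}\mathbf{1}_{e_l}^T - \mathrm{diag}(\mathbf{1}_{e_l})$ has rank at most $2$ and operator norm at most $r_l$, and $\|G\|\le 1/\operatorname{Im} z$, I expect
\[
|\partial_l^j F| \le C_j(z)\, \frac{r_l^{j}}{n\,(n\sigma^2)^{j/2}},
\]
where the factor $1/n$ comes from a trace of a rank-$O(1)$ matrix. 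This gives $\lambda_2 \lesssim r_{\max}^2/(n^2\sigma^2)$ and $\lambda_3\lesssim r_{\max}^3/(n(n\sigma^2)^{3/2})$. The paper's finer normalisation, with $r_{\max}^4$, $r_{\max}^6$ and $\xi$, probably comes from a cruder entrywise bound, which I would reproduce as Lemma~\ref{applicazioneChatterjee}.

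The Pastur condition with truncation level $\epsilon K_n$ then makes the first term $o(1)$. For the third-moment term I need
\[
K_n\sum_l\sigma_{i(l)}^2 \cdot \lambda_3 = \epsilon\cdot O(1).
\]
I will use $\sum_l \sigma_{i(l)}^2=\sum_i\binom{n}{r_i}\sigma_i^2 = \frac{n(n-1)}{2}\sum_i \binom{n-2}{r_i-2}\sigma_i^2/(r_i(r_i-1))$, which is comparable to $n^2\sigma^2\xi$. Hence
\[
\lambda_3 K_n\sum_l\sigma^2_l \lesssim \epsilon\, \frac{r_{\max}^3\, n^2\sigma^2\xi}{n(n\sigma^2)^{3/2}} \cdot\frac{\sqrt{n\sigma^2}}{r_{\max}^6\xi} = \epsilon\, O(1).
\]
Letting $\epsilon\to0$ after $n\to\infty$ proves (i). The main obstacle is matching the powers of $r_{\max}$ exactly. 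This step is where the specific choice of $K_n$ and of $\xi$ must be justified.

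For part (ii), I verify the Pastur condition~\eqref{P} for both vectors.
\begin{itemize}
\item \textbf{Bernoulli vector $\bm X$.} We have $|X_l|\le 1$, so the indicator vanishes as soon as $\epsilon K_n>1$. It therefore suffices that $K_n\to\infty$, that is, $n\sigma^2/(r_{\max}^{12}\xi^2)\to\infty$.
\item \textbf{Gaussian vector $\bm Z$.} A Gaussian tail bound gives $\mathbb{E}[Z_l^2\mathds{1}(|Z_l|>t)]\lesssim (t^2+\sigma_i^2)e^{-t^2/2\sigma_i^2}$, with $\sigma_i^2\le 1/4$. This is superpolynomially small once $K_n$ exceeds a power of $\log n$. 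Since $M\le k\,2^n$ may be huge, I will instead bound the sum by the fourth-moment Markov estimate $\sum_l \mathbb{E}Z_l^4/(\epsilon K_n)^2\lesssim \sum_l\sigma_l^2/K_n^2$. The required conclusion $o(n^2\sigma^2/r_{\max}^4)$ then reduces to $\xi r_{\max}^4/K_n^2\to0$. The same Markov route also works for $\bm X$.
\end{itemize}

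It remains to show that both requirements, $K_n\to\infty$ and $K_n^2/(\xi r_{\max}^4)\to\infty$, follow from~\eqref{D}. Using $1-p_i=\Theta(1)$ gives $\sigma_i^2\asymp p_i$. Then
\[
n\sigma^2\asymp n\sum_i\binom{n-2}{r_i-2}p_i\asymp\sum_i r_i d_i,
\]
since $\binom{n-2}{r_i-2}=\frac{r_i-1}{n-1}\binom{n-1}{r_i-1}$. Also $n\sigma^2\xi \asymp \sum_i d_i/r_i = c_n$, because $w_i \asymp \binom{n-2}{r_i-2}\sigma_i^2/\sigma^2$. So
\[
\frac{K_n^2}{\xi r_{\max}^4}=\frac{(n\sigma^2)^2}{r_{\max}^{16}\xi^2\cdot n\sigma^2\xi}\asymp \frac{a_n}{b_nc_n}\to\infty.
\]
Finally, $K_n\to\infty$ follows from this, since $\xi r_{\max}^4\ge w_{\max}\ge 1/k$. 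With both conditions established, part (i) gives (ii).
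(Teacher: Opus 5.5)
Your argument is correct and has the same architecture as the paper's proof. It applies Chatterjee's replacement theorem to $f=\frac1n\mathrm{Tr}\,R$, bounds truncated third moments by $\mathbb{E}[|Y|^3\mathds{1}(|Y|\le K)]\le K\,\mathbb{E}Y^2$ with $K=\epsilon K_n$, and uses Markov-type tail bounds in (ii). Your fourth-moment bound for $\bm Z$ is exactly the quantity the paper obtains from Cauchy--Schwarz plus Markov.

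There are two differences.

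\emph{Sharper derivative estimates.} The paper bounds $|\mathrm{Tr}(Q_lR^2)|$ entrywise by $r_{\max}(r_{\max}-1)/b^2$ and then absorbs every term into the worst power. This gives $\lambda_2=O(r_{\max}^4/(n^2\sigma^2))$ and $\lambda_3=O(r_{\max}^6/(n^{5/2}\sigma^3))$, whereas you get $r_{\max}^2$ and $r_{\max}^3$. Your stated reason is false, though: on the vertices of $e_l$, $Q_l=J_{r_l}-I_{r_l}$ has full rank $r_l$, not rank at most $2$. What you actually need is the trace norm, $\|Q_l\|_{S_1}=2(r_l-1)$ (one eigenvalue $r_l-1$ and $r_l-1$ eigenvalues $-1$). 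This gives $|\mathrm{Tr}(Q_lB)|\le 2(r_l-1)\|B\|_{\mathrm{op}}$ and hence exactly your bounds.

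\emph{Treatment of $\bm X$.} You use $|X_l|\le 1$, so \eqref{P} holds trivially once $\epsilon K_n>1$, and you derive $K_n\to\infty$ from \eqref{D} via $\xi r_{\max}^4\gtrsim 1$. The paper instead applies Chebyshev and lands directly on $O(b_nc_n/a_n)$. Both work; yours is shorter.

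One step should be rewritten. You identify $\sum_l\sigma_l^2=\sum_i\binom{n}{r_i}\sigma_i^2\asymp n^2\sigma^2\xi$; the paper makes the same claim as $\rho=\Theta(\xi)$. In general only $\xi\lesssim\rho$ holds. The limit $B_i/\sigma^2\to w_i$ does not control $\sum_i r_i^{-2}B_i/\sigma^2$ when some $w_i=0$ for a class with $r_i\ll r_{\max}$. For example, take $k=2$, $r_1\sim\sqrt n$, $r_2\sim n/2$, $B_1/\sigma^2=1/\log n$, $\sigma^2=n^{100}$. This is compatible with $1-p_i=\Theta(1)$ and \eqref{D}, yet $\rho/\xi\asymp n/\log n\to\infty$.

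With the paper's $\lambda_3$, the third-moment term $2\epsilon\rho/\xi$ is then unbounded. With your $\lambda_3$ you do not need the identification at all. Since $\rho\le 1/2$ and $\xi\ge r_{\max}^{-2}$, the third-moment term is at most $C(z)\,\epsilon\,\rho/(r_{\max}^3\xi)\le C(z)\,\epsilon/r_{\max}$.

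In (ii), likewise bound $\sum_l\sigma_l^2\le\sum_i\binom{n}{r_i}p_i=nc_n$ directly. Your Gaussian requirement then becomes $r_{\max}^{16}\xi^2c_n/(n\sigma^2)^2\asymp b_nc_n/a_n\to 0$, which is what you want. Your claim $n\sigma^2\xi\asymp c_n$ is true only as $n\sigma^2\xi\lesssim c_n$, but that is the direction you actually use.
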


\section{Discussion} \label{sec_discussion}

\subsection{Dominant and balanced regimes} \label{sec_regimes}
In our results, two key quantities emerge: $\xi$ (defined in~\eqref{xi}), which appears in the conditions for Gaussianization, and $s^2$, the variance of the semicircle law (defined in~\eqref{variance_semicircle}).
Each of these is expressed as a convex combination, with common coefficients $w_i \in [0,1]$, of terms depending on a single hyperedge size.
The coefficients are functions of the full set of parameters $(\bm{r},\bm{p})$ of our model and can be interpreted as weights assigned to each type of hyperedge.
Interpreting the results therefore requires understanding the behavior of these weights.

To simplify the analysis, let $k=2$.
In this case, since the coefficients satisfy the constraint $w_1 + w_2 = 1$, it suffices to study $w_1$.
To this end, it is convenient to express $w_1$ in terms of the average degrees.
Note that
\begin{equation*}
    w_1 = 
    \lim_{n \to \infty} 
    \frac{1}{1+ \frac{\binom{n-2}{r_2-2}p_2(1-p_2)}{\binom{n-2}{r_1-2}p_1(1-p_1)}}.
\end{equation*}
Since
\begin{align*}
   \binom{n-2}{r_i-2} p_i & = \frac{r_i-1}{n-1} d_i,
\end{align*}
we can write
\begin{equation*}
    w_1 = \lim_{n \to \infty} \frac{1}{1+ rdq},
\end{equation*}
where
\begin{equation} 
    r \coloneqq \frac{r_2-1}{r_1-1}, \qquad d \coloneqq \frac{d_2}{d_1}, \qquad q \coloneqq \frac{1-p_2}{1-p_1}.
\end{equation}

\begin{table}
\centering
\renewcommand{\arraystretch}{3}
\begin{tabular}{|c|c|c|c|}
\hline
\makecell[c]{\textbf{$r_1$-degree} \\ \textbf{versus} \\ \textbf{$r_2$-degree}}
& \makecell[c]{\textbf{Sufficient} \\ \textbf{condition for} \\\textbf{Gaussianization}} 
& \makecell[c]{\textbf{Hyperedge} \\ \textbf{dominance}}
& \makecell[c]{\textbf{Variance of the limiting} \\ \textbf{semicircle law}} \\
\hline
$d_1 \gg d_2$
& $\begin{aligned} d_1 \gg {n}^{9} \end{aligned}$
& $r_1$-dominant: $w_1=1$
& $s^2=(1-c_1)^2$ \\
\hline 
$d_1 = \Theta(d_2)$
& \makecell[c]{$
d_1 \gg {n}^{9}$ \\ $\iff \, d_2 \gg {n}^{9}
$}
& Balanced: $0 < w_1< 1$ 
& $s^2= w_1(1-c_1)^2 + w_2(1-c_2)^2$ \\ 
\hline
$d_1 \ll d_2$
& $\begin{aligned} d_2 \gg {n}^{9} \end{aligned}$
& $r_2$-dominant: $w_1=0$
& $s^2=(1-c_2)^2$ \\
\hline
\end{tabular}
\caption{Illustration of Theorems~\ref{thm_semicircle} and~\ref{thm_Gaussianization} for $k=2$, where the hyperedge sizes $r_1$ and $r_2$ are of the same magnitude and scale linearly with $n$, with constants $0<c_1 < c_2<1$.
The table shows how the non-sparsity condition for convergence to the semicircle law (see Remark~\ref{same_magnitude_condition}) and the corresponding variance vary depending on the behavior of $d_1$ relative to $d_2$.}
\label{tab:regimes-comparison-same-magnitude}
\end{table}
 
Since we assume that the connection probabilities are bounded away from $1$ for large $n$ (see the hypotheses of Theorem~\ref{thm_semicircle}), the ratio $q$ does not play an asymptotically significant role.
Accordingly, depending on the magnitude of $d$ relative to $r$, we identify three \emph{dominance regimes}, which lead to different variances of the limiting semicircle law when $c_1 \neq c_2$ (to fix the ideas, suppose $c_1 < c_2$):
\begin{itemize}
\item \emph{$r_1$-dominant regime}. If $d \ll r^{-1}$, then $w_1=1$, and the contribution of the $r_2$-edges is negligible:
\begin{equation*}
   s^2 = (1 - c_1)^2.
\end{equation*} 
\item \emph{Balanced regime}. If $d = \Theta(r^{-1})$, then $0 < w_1 < 1$, and both types of hyperedges contribute to the limit:
\begin{equation*}
   (1 - c_2)^2 < s^2 < (1 - c_1)^2.
\end{equation*}
\item \emph{$r_2$-dominant regime}. If $d \gg r^{-1}$, then $w_1=0$, and the contribution of the $r_1$-edges is negligible:
\begin{equation*}
   s^2 = (1 - c_2)^2.
\end{equation*} 
\end{itemize}

Our main results, Theorems~\ref{thm_semicircle} and~\ref{thm_Gaussianization}, are fully general and apply to all asymptotic scalings of the parameters $r_1$ and $r_2$. 
We have two distinct possibilities: 
\begin{enumerate}
\item Hyperedges of the same magnitude, i.e., $r_1 = \Theta(r_2)$;
\item Hyperedges of different magnitude, e.g., $r_1 \ll r_2$. 
\end{enumerate}
Table~\ref{tab:regimes-comparison-same-magnitude} illustrates the aforementioned dominance regimes for hyperedges of the same magnitude (i.e., when $0<c_1<c_2<1$).

\subsection{Corollaries and showcase examples} \label{examples}

We now present some special cases of Theorem~\ref{thm_semicircle}.

Recalling that the average degrees are defined as $d_i = \binom{n-1}{r_i-1} p_i$ (see~\eqref{defaveragedegrees}), an asymptotic analysis of the binomial coefficients allows us to express the conditions for convergence to the semicircle law explicitly in terms of the asymptotic behaviour of the connection probabilities.
We consider the two `extreme' regimes in which the hyperedge sizes either scale linearly with $n$ or remain independent of $n$.

\begin{corollary}[Hyperedge sizes linear in $n$] \label{corollary_linear_sizes}
     Let $I(c) \coloneqq -c \ln c -(1-c) \ln(1-c)$ be the entropy of a Bernoulli random variable with parameter $c$.
     For each $1 \leq i \leq k$, assume that $r_i/n \to c_i \in (0,1)$ and $1-p_i=\Theta(1)$. Suppose moreover that there exists $i \in \{1,\dots,k\}$ such that 
    \begin{equation*}
        \frac{d_i}{n^9} \to \infty \quad\iff\quad p_i \gg \frac{n^{19/2}}{e^{I(c_i)n}}.
    \end{equation*}
    Then 
    \begin{equation*}
        \bar{\mu}_{H_n(\bm{X})} \xrightarrow{n\to\infty} \nu_{s^2} \qquad{\text{weakly}},
    \end{equation*}
    where $s^2$ is defined in~\eqref{variance_semicircle}.
\end{corollary}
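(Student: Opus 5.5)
Corollary~\ref{corollary_linear_sizes} is a direct specialization of Theorem~\ref{thm_semicircle}, so the plan is to check its hypotheses and then translate the degree condition into a condition on $p_i$. The hypotheses $r_i/n\to c_i\in(0,1)\subset[0,1)$ and $1-p_i=\Theta(1)$ are assumed verbatim. It therefore remains to verify the non-sparsity condition~\eqref{D}. (Assumption~\ref{regularity_condition} is implicitly in force throughout, since $w_i$ appear in $s^2$.)

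\textbf{Non-sparsity condition.} Since every $c_i>0$, all $r_i=\Theta(n)$ are of the same magnitude with $r=n$. Remark~\ref{same_magnitude_condition} then states that~\eqref{D} is equivalent to $d_{\mathrm{tot}}/r^9\to\infty$, and hence to $d_i/r_i^9\to\infty$ for some $i$. As $r_i=\Theta(n)$, this is in turn the same as $d_i/n^9\to\infty$. I would briefly recheck the remark's computation here: $\xi=\sum_i w_i r_i^{-2}=\Theta(n^{-2})$ because $\sum_i w_i=1$, so $b_n=\Theta(n^{12})$, $a_n=\Theta(n^2d_{\mathrm{tot}}^2)$ and $c_n=\Theta(d_{\mathrm{tot}}/n)$. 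This gives $a_n/(b_nc_n)=\Theta(d_{\mathrm{tot}}/n^9)$, and $d_{\mathrm{tot}}\to\infty$ relative to $n^9$ exactly when the largest $d_i$ does, since $k$ is fixed. Theorem~\ref{thm_semicircle} then yields the stated convergence with $s^2=\sum_i w_i(1-c_i)^2$.

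\textbf{The equivalence $d_i/n^9\to\infty \iff p_i\gg n^{19/2}e^{-I(c_i)n}$.} Write $d_i=\binom{n-1}{r_i-1}p_i=\frac{r_i}{n}\binom{n}{r_i}p_i$, so $d_i=\Theta(\binom{n}{r_i}p_i)$. By Stirling's formula, for $r_i/n$ bounded away from $0$ and $1$,
\[
\binom{n}{r_i}=\Theta\!\left(n^{-1/2}\exp\bigl(n\,I(r_i/n)\bigr)\right).
\]
The main obstacle is replacing $I(r_i/n)$ by $I(c_i)$ inside the exponent. Since the exponent is multiplied by $n$, we have $n|I(r_i/n)-I(c_i)|\approx n|I'(c_i)|\,|r_i/n-c_i|$, and this is bounded only if $r_i=c_in+O(1)$. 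Mere convergence $r_i/n\to c_i$ only gives an $e^{o(n)}$ factor.

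I would first try to handle this by reading the displayed equivalence under the natural convention $r_i=c_in+O(1)$, or equivalently by interpreting $e^{I(c_i)n}$ as $e^{nI(r_i/n)}$ up to constants. In that case $\binom{n}{r_i}=\Theta(n^{-1/2}e^{I(c_i)n})$, and hence $d_i/n^9\to\infty$ iff $p_i\,n^{-1/2}e^{I(c_i)n}\gg n^9$, i.e.\ $p_i\gg n^{19/2}e^{-I(c_i)n}$. If one insists on general $r_i/n\to c_i$, then I would state the equivalence with $I(r_i/n)$ in place of $I(c_i)$ and note that it reduces to the displayed form whenever $n(r_i/n-c_i)=O(1)$. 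This bookkeeping in Stirling's approximation is the only nontrivial step; everything else is direct substitution into Theorem~\ref{thm_semicircle}.
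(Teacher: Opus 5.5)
Your proposal is correct and follows the paper's proof: since all $r_i=\Theta(n)$, you reduce \eqref{D} to $d_i/n^9\to\infty$ via Remark~\ref{same_magnitude_condition}, apply Theorem~\ref{thm_semicircle}, and use Stirling only for the reformulation in terms of $p_i$. Your caveat on that last step is valid and sharper than the paper's remark. Mere $r_i/n\to c_i$ determines $\binom{n}{r_i}$ only up to an $e^{o(n)}$ factor, so the paper's claim that $k\sim cn$ gives $\binom{n}{k}\sim e^{I(c)n}/\sqrt{2\pi nc(1-c)}$ is too loose, and the $p_i$-form of the hypothesis needs something like $r_i=c_in+O(1)$, or $I(r_i/n)$ in the exponent; the substantive hypothesis $d_i/n^9\to\infty$ and the conclusion are unaffected.
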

\begin{proof}
    The sufficiency of the condition on the average degrees follows from Remark~\ref{same_magnitude_condition}.
    The corresponding requirement in terms of the connection probabilities follows from the asymptotics of the binomial coefficient, obtained via Stirling’s approximation for the factorial.
    Indeed, if $k \sim cn$, where $c \in (0,1)$, then, $\binom{n}{k} \sim \frac{\exp(I(c)n)}{\sqrt{2\pi nc(1-c)}}$.
\end{proof}

\begin{Example}
Let $k=2$.
Choose $r_1 = c_1n$ and $r_2=c_2 n$, with $c_1 \in (0,1)$, $c_1 \neq 1/2$, and $c_2 = 1/2$.
Assume that the hypotheses of Corollary~\ref{corollary_linear_sizes} are satisfied, so that convergence to the semicircle law holds.
Since the hyperedges have the same order of magnitude, the dominant and balanced regimes are determined by comparing $d_1$ and $d_2$ (see Table~\ref{tab:regimes-comparison-same-magnitude}).
By the asymptotics of the binomial coefficients, we have
\begin{equation*}
    \frac{d_2}{d_1} = \frac{\binom{n-1}{r_2-1}p_2}{\binom{n-1}{r_1-1}p_1} = \Theta\left(\frac{p_2}{p_1}e^{(I(1/2)-I(c_1))n}\right).
\end{equation*}
Since the entropy function $I(c)$ attains its unique maximum at $c=1/2$, we have $I(1/2) > I(c_1)$.

Consider the homogeneous case $p_1 = p_2$.
Then $d_2 \gg d_1$, hence, despite the homogeneity, the model is always in an $r_2$-dominant regime: $w_2=1$ and $s^2=1/4$.

To obtain a balanced regime, $p_2$ must be chosen significantly smaller than $p_1$, specifically with exponential decay: $p_2 = \Theta(e^{-(I(1/2)-I(c_1))n}p_1)$.
\end{Example}

\begin{corollary}[Hyperedge sizes fixed] \label{corollary_fixed_sizes}
    For each $1 \leq i \leq k$, assume that $r_i$ does not depend on $n$ and $1-p_i=\Theta(1)$.
    Suppose moreover that there exists $i \in \{1,\dots,k\}$ such that 
    \begin{equation*}
    d_i \to \infty \quad\iff\quad p_i \gg n^{1-r_i}.
    \end{equation*}
    Then
    \begin{equation*}
        \bar{\mu}_{H_n(\bm{X})} \xrightarrow{n\to\infty} \nu_{1} 
        \qquad{\text{weakly}}.
    \end{equation*}
\end{corollary}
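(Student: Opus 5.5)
This is a direct specialization of Theorem~\ref{thm_semicircle}; three hypotheses need checking. First, since each $r_i$ is fixed, $r_i/n \to 0$, so $c_i = 0$ for all $i$, and $1-p_i=\Theta(1)$ is assumed. Once the non-sparsity condition~\eqref{D} is verified, Theorem~\ref{thm_semicircle} gives the semicircle law with variance $s^2=\sum_i w_i(1-0)^2=\sum_i w_i$.

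Second, we need $\sum_i w_i = 1$. By definition~\eqref{weights}, the ratios sum to $1$ for every $n$, so whenever the limits exist their sum is $1$. The existence of the limits is the regularity Assumption~\ref{regularity_condition}, which the paper assumes throughout; I will invoke it here. This yields $s^2=1$. If one wants to avoid relying on it, a subsequence argument works instead: the vector of ratios lies in a compact simplex, every subsequence has a further subsequence along which the limits exist, and the limiting variance is always $1$. Hence the full sequence of EESDs converges to $\nu_1$.

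Third, we verify~\eqref{D}. All $r_i$ are $\Theta(1)$, so they have the same magnitude with $r=1$. By Remark~\ref{same_magnitude_condition}, \eqref{D} is then equivalent to $d_{\mathrm{tot}} \to \infty$, i.e.\ to $d_i/r_i^9 \to \infty$ for some $i$, which is the same as $d_i\to\infty$ because $r_i$ is constant. Directly: $a_n=\Theta(d_{\mathrm{tot}}^2)$, $b_n=\Theta(1)$ (here $\xi\in[1/r_{\max}^2,1/4]$), and $c_n=\Theta(d_{\mathrm{tot}})$, so $a_n/(b_nc_n)=\Theta(d_{\mathrm{tot}})\to\infty$.

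Finally, the equivalence $d_i\to\infty \iff p_i\gg n^{1-r_i}$ follows from $\binom{n-1}{r_i-1}\sim n^{r_i-1}/(r_i-1)!$ for fixed $r_i$, so that $d_i=\Theta(n^{r_i-1}p_i)$. None of these steps is a real obstacle. The only point needing care is that the weights sum to one in the limit, which is immediate under Assumption~\ref{regularity_condition} and handled otherwise by the subsequence argument above.
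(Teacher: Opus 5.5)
Your proposal is correct and follows the paper's own proof. Like the paper, you specialize Theorem~\ref{thm_semicircle} with $c_i=0$, so that $s^2=\sum_i w_i=1$, obtain~\eqref{D} from Remark~\ref{same_magnitude_condition}, and use $\binom{n-1}{r_i-1}=\Theta(n^{r_i-1})$ for fixed $r_i$ to rewrite $d_i\to\infty$ as $p_i\gg n^{1-r_i}$. Only the direction ``some $d_i\to\infty$ implies $d_{\mathrm{tot}}\to\infty$'' is needed there, and your direct check $a_n/(b_nc_n)=\Theta(d_{\mathrm{tot}})$ confirms it. The subsequence remark for dropping Assumption~\ref{regularity_condition} is a harmless extra that the paper does not need, since it assumes regularity throughout.
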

\begin{proof}
Under the stated assumptions, we have $c_i = 0$ for $1 \leq i \leq k$, whence $s^2 = \sum_{i=1}^{k} w_i = 1$.
The sufficiency of the condition on the average degrees follows from Remark~\ref{same_magnitude_condition}, while the corresponding condition on the connection probabilities follows from the asymptotic estimate of the binomial coefficient.
Indeed, if $k$ is fixed, then $\binom{n}{k} = \Theta(n^k)$.
\end{proof}

\begin{Remark}\label{rem:SparseCase}
It is interesting to compare Corollary~\ref{corollary_fixed_sizes} with the $r$-uniform case with average degree $d_{\mathrm{avg}}$, for fixed $r$ (see Table~\ref{tab:comparison_uniform}).
The problem of identifying the limiting spectral distribution has been studied in different density/sparsity regimes:
\begin{itemize}
\item If $d_{\mathrm{avg}} \to \infty$, the LSD is the standard semicircle law $\nu_{1}$; see~\cite{ipergrafiuniformi}.
\item If $d_{\mathrm{avg}} \to \lambda > 0$, the LSD depends on $r$ and $\lambda$, and is denoted by $\Gamma_{r-1,\lambda}$; see~\cite{sparseuniform}.
\end{itemize}
The characterization of the distribution in the sparse case remains largely implicit, although some of its properties are described in~\cite[Prop.~5,6]{lsdsparse}: for $r \geq 3$, it is unbounded, asymmetric about $0$, and, under suitable scaling, converges in distribution to the standard semicircle law as $\lambda \to \infty$.

When $r = 2$, the model reduces to the classical Erd\H{o}s--R\'enyi sparse graph, and more detailed results are available for the limiting measure $\Gamma_{1,\lambda}$.
This line of research was initiated by the seminal work~\cite{bauer_and_golinelli}, which provided numerical predictions for several features of $\Gamma_{1,\lambda}$, later established analytically.
The existence of the limiting spectral distribution was established in~\cite{khorunzhy2004eigenvalue}.
The convergence in probability was later strengthened to an almost sure convergence in~\cite{Bordenave_Lelarge_Salez}.
The measure $\Gamma_{1,\lambda}$ turns out to have a dense set of atoms; see~\cite{Salez}.
Moreover, it has a continuous component if and only if $\lambda > 1$; see~\cite{Bordenave_Sen_Virag}.
An explicit formula for the atomic mass at $0$ was proved in~\cite{Bordenave_Lelarge_Salez}.
Finally, $\lambda = e$ is the threshold for the emergence of so-called \emph{extended states} (roughly speaking, an absolutely continuous part) at $0$ in $\Gamma_{1,\lambda}$; see~\cite{Coste_and_Salez}.
\end{Remark}

\begin{table}
\centering
\renewcommand{\arraystretch}{3}
\begin{tabular}{|c|c|c|c|}
\hline
\makecell[c]{\textbf{Features of} \\ \textbf{$r_1$-edges}} 
& \makecell[c]{\textbf{$r_1$-uniform} \\ \textbf{(E)LSD}} 
& \makecell[c]{\textbf{Features of} \\ \textbf{$r_2$-edges}}
& \makecell[c]{\textbf{Non-uniform} \\ \textbf{ELSD}} \\
\hline
\makecell[c]{$d_1 \to \infty$ \\ $p_1 \gg n^{1-r_1}$}
& $\nu_1$
& irrelevant
& $\nu_{1}$ \\
\hline
\multirow{2}{*}{\makecell[c]{$d_1 \to \lambda_1$ \\ $p_1=\Theta(n^{1-r_1})$}}
& \multirow{2}{*}{\makecell[c]{$\Gamma_{r_1-1,\lambda_1}$}}
& \makecell[c]{$d_2 \to \infty$ \\ $p_2 \gg n^{1-r_2}$}
& $\nu_1$ \\
\cline{3-4}
&
& \makecell[c]{$d_2 \to \lambda_2$ \\ $p_2=\Theta(n^{1-r_2})$}
& \textbf{?} \\
\hline
\end{tabular}
\caption{Comparison between the results obtained in the uniform and non-uniform settings for $k=2$ and fixed hyperedge sizes.
If one of the uniform subhypergraphs is sparse, then it does not contribute to the limit.
Theorem~\ref{thm_semicircle} does not cover the case in which both subhypergraphs are sparse.
Identifying the limit in this regime remains an open problem and is left for future work.}
\label{tab:comparison_uniform}
\end{table}

Corollary~\ref{corollary_fixed_sizes} applies, in particular, to the superposition of Erd\H{o}s--R\'enyi random graphs of Example~\ref{def_superposition}.
\begin{corollary}[Superposition of Erd\H{o}s--R\'enyi random graphs] \label{superposition_result}
    For each $1 \leq i \leq k$, assume that $r_i = 2$ and $1-p_i=\Theta(1)$.
    Suppose moreover that there exists $i \in \{1,\dots,k\}$ such that $np_i \to \infty$.
    Then
    \begin{equation*}
        \bar{\mu}_{H_n(\bm{X})} \xrightarrow{n\to\infty} \nu_{1} 
        \qquad{\text{weakly}}.
    \end{equation*}
\end{corollary}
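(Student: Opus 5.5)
The plan is to derive this corollary directly from Corollary~\ref{corollary_fixed_sizes}, specializing to $\bm{r}=(2,\dots,2)$. In that setting every $r_i=2$ is fixed and does not depend on $n$, and the assumption $1-p_i=\Theta(1)$ is carried over verbatim. So the only thing to check is the degree hypothesis: that some $i$ has $d_i\to\infty$, or equivalently $p_i\gg n^{1-r_i}=n^{-1}$.

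By~\eqref{defaveragedegrees}, with $r_i=2$ the average degree is $d_i=\binom{n-1}{1}p_i=(n-1)p_i$. Since $(n-1)/n\to1$, the hypothesis $np_i\to\infty$ is equivalent to $d_i\to\infty$. Corollary~\ref{corollary_fixed_sizes} then gives $\bar\mu_{H_n(\bm{X})}\to\nu_1$ weakly.

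For the limiting variance, here $c_i=\lim r_i/n=0$ for every $i$, so $s^2=\sum_i w_i=1$. Note that the existence of the weights $w_i$ (Assumption~\ref{regularity_condition}) is not actually needed for this conclusion. With all $c_i=0$, the variance equals $\sum_i w_i$, and this sum is $1$ whenever the limits exist. If the limits $w_i$ did not exist, I would pass to subsequences along which they converge. Every subsequential limit of $\bar\mu_{H_n(\bm{X})}$ is then $\nu_1$, so the whole sequence converges to $\nu_1$.

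I do not expect any real obstacle; the only point to state explicitly is this subsequence remark. As a sanity check, Remark~\ref{same_magnitude_condition} applies with $r=2$: its condition $d_{\mathrm{tot}}/r^9\to\infty$ reduces to $d_{\mathrm{tot}}\to\infty$, which follows from a single $np_i\to\infty$.
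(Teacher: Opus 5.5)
Your proposal is correct and follows the paper's route exactly: the corollary is Corollary~\ref{corollary_fixed_sizes} specialized to $r_i=2$, where $d_i=(n-1)p_i$, so $np_i\to\infty$ is precisely $d_i\to\infty$ (equivalently $p_i\gg n^{1-r_i}=n^{-1}$). Your subsequence remark on Assumption~\ref{regularity_condition} is valid but not needed here, because for $r_i\equiv 2$ the finite-$n$ versions of $\xi$ and of $1-2\gamma_n+\rho_n$ are identically $1/4$ and $1$, so the weights $w_i$ never actually enter.
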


As a final immediate corollary of Theorem~\ref{thm_semicircle}, we recover a known result due to~\cite{ipergrafiuniformi} for uniform hypergraphs.
For classical Erd\H{o}s--R\'enyi random graphs (i.e., $r_1=2$), this was established in~\cite[Corollary 1.2]{erclassico}. 

\begin{corollary}[Uniform hypergraphs] \label{corollary_uniform_hypergraphs}
    Let $k=1$.
    Suppose that $r_1/n \to c \in [0,1)$, $1-p_1=\Theta(1)$, and $d_1/r_1^9 \to \infty$.
    Then
    \begin{equation*}
        \bar{\mu}_{H_n(\bm{X})} \xrightarrow{n\to\infty} \nu_{(1-c)^2} 
        \qquad{\text{weakly}}. 
    \end{equation*}
\end{corollary}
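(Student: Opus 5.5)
This is the case $k=1$ of Theorem~\ref{thm_semicircle}, so the plan is simply to check that its hypotheses hold and then to evaluate the variance~\eqref{variance_semicircle}.

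\emph{Regularity.} With $k=1$, the ratio in~\eqref{weights} is identically equal to $1$. Hence Assumption~\ref{regularity_condition} holds trivially, with $w_1=1$. This gives $\xi=1/r_1^2$ and $r_{\max}=r_1$.

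\emph{Non-sparsity.} Next we check~\eqref{D}. Substituting the above values:
\begin{itemize}
\item $a_n=r_1^2d_1^2$;
\item $b_n=r_1^{16}r_1^{-4}=r_1^{12}$;
\item $c_n=d_1/r_1$.
\end{itemize}
Therefore
\begin{equation*}
\frac{a_n}{b_nc_n}=\frac{r_1^2d_1^2}{r_1^{12}\,d_1/r_1}=\frac{d_1}{r_1^{9}},
\end{equation*}
which tends to infinity by hypothesis. This is exactly the computation of Remark~\ref{same_magnitude_condition} with $r=r_1$ and $d_{\mathrm{tot}}=d_1$.

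\emph{Remaining hypotheses.} The conditions $r_1/n\to c\in[0,1)$ and $1-p_1=\Theta(1)$ are assumed directly.

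\emph{Conclusion.} Theorem~\ref{thm_semicircle} now yields weak convergence of $\bar{\mu}_{H_n(\bm{X})}$ to $\nu_{s^2}$, with
\begin{equation*}
s^2=w_1(1-c)^2=(1-c)^2.
\end{equation*}

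There is no genuine obstacle here. The only step needing care is the exponent bookkeeping in $b_n$, namely $\xi^2=r_1^{-4}$, so that the non-sparsity condition reduces to precisely $d_1/r_1^9\to\infty$.
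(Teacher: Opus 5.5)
Your proposal is correct and matches the paper, which presents this result as an immediate specialization of Theorem~\ref{thm_semicircle} to $k=1$: there $w_1=1$, $\xi=1/r_1^2$, the non-sparsity condition~\eqref{D} reduces to $d_1/r_1^9\to\infty$ exactly as you computed, and $s^2=(1-c)^2$. The only implicit point is that the ratio in~\eqref{weights} is well defined because $\sigma_1^2>0$ for large $n$. This follows from $d_1\to\infty$, which gives $p_1>0$, together with $1-p_1=\Theta(1)$, which gives $p_1<1$.
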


\section{Proofs}
\label{proofs}
\subsection{Gaussianization: proof of Theorem~\ref{thm_Gaussianization}}

This section is devoted to the proof of the Gaussianization result, Theorem~\ref{thm_Gaussianization}. 

We begin by outlining the strategy of the proof. 
First, in Lemma~\ref{applicazioneChatterjee}, we bound the distance between the Stieltjes transforms of the EESDs of the random matrix $H_n(\bm{X})$ associated with the hypergraph model and of its Gaussianized counterpart $H_n(\bm{Z})$, where $\bm{Z}$ is defined in~\eqref{Z}.
To this end, we apply a general result due to Chatterjee~\cite{Chatterjee}, which extends Lindeberg’s method for the central limit theorem.
Next, in Proposition~\ref{PasturGaussianization}, we identify a condition on the random vectors $\bm{X}$ and $\bm{Z}$ under which the error term in this bound vanishes, namely the Pastur-type condition~\eqref{P}.
Finally, in Proposition~\ref{prop_D1_D2}, we use standard probabilistic inequalities to derive a condition that is stronger than the Pastur-type one but easier to verify in practice, namely the non-sparsity condition~\eqref{D}.

Our strategy is similar to that of~\cite{ipergrafiuniformi}, although there are some notable differences.
For instance, in the derivation of the Pastur-type condition in the proof of Proposition~\ref{PasturGaussianization}, the constant $K_n$ depends on the connection probabilities, whereas in the uniform case it does not.
Nevertheless, the definition reduces to that of the uniform case when there is only one hyperedge size, i.e., when $k=1$.
Moreover, the Gaussianization step requires assuming that $\bm{Z}$ satisfies the Pastur-type condition, a property that holds in the uniform setting independently of the model parameters.
For this reason, in Lemma~\ref{PasturZ}, we derive a stronger assumption ensuring that $\bm{Z}$ satisfies the Pastur-type condition.
Interestingly, this condition turns out to coincide with the one ensuring that $\bm{X}$ satisfies the Pastur-type condition (see Lemma~\ref{PasturX}).

\subsubsection{Derivation of the Pastur-type condition}
We first establish the sufficiency of the Pastur-type condition for Gaussianization.
The proof relies on the following result.

\begin{theorem}[Chatterjee~\cite{Chatterjee}] \label{theoremChatterjee}
Let $\bm{Y} = (Y_1,\dots,Y_M)$ and $\bm{W} = (W_1,\dots,W_M)$ be two independent random vectors defined on the same probability space.
Assume that the random variables $Y_i$ and the $W_i$ are independent, take values in some open interval $I$, have finite second moments, and satisfy $\mathbb{E}[Y_i] = \mathbb{E}[W_i]$ and $\mathbb{E}[Y_i^2] = \mathbb{E}[W_i^2]$ for each $i$.

Let $f \colon I^M \rightarrow \mathbb{R}$ be thrice differentiable in each argument, and set $U \coloneqq f(\bm{Y})$ and $V \coloneqq f(\bm{W})$.
Then, for any thrice differentiable $g\colon \mathbb{R} \rightarrow \mathbb{R}$ and any $K > 0$, we have
\begin{align*}
  |\mathbb{E}g(U) - \mathbb{E}g(V)| 
  & \leq C_1(g)\lambda_2(f)
  \sum_{i=1}^{M} \biggl[\mathbb{E}\!\left[Y_i^2 \mathds{1}(|Y_i| > K) \right] + \mathbb{E}[W_i^2 \mathds{1}(|W_i| > K)] \biggr] \\
   & + C_2(g)\lambda_3(f) \sum_{i=1}^{M} \biggl[\mathbb{E}\!\left[|Y_i|^3 \mathds{1}(|Y_i| \leq K) \right] + \mathbb{E}\!\left[|W_i|^3 \mathds{1}(|W_i| \leq K) \right] \biggr],
\end{align*}
where $C_1(g) \coloneqq ||g'||_{\infty}+||g''||_{\infty}$, $C_2(g)\coloneqq \frac{1}{6}||g'||_{\infty}+\frac{1}{2}||g''||_{\infty}+\frac{1}{6}||g'''||_{\infty}$, and 
\begin{equation} \label{def_lambda}
    \lambda_s(f) \coloneqq \sup\left\{\left|\diffp[q]{f(\bm{x})}{x_i}\right|^{\frac{s}{q}} \colon 1 \leq i \leq M, \;\; 1 \leq q \leq s, \;\; \bm{x} \in I^M\right\}.
\end{equation}
\end{theorem}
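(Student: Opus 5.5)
The plan is the classical Lindeberg replacement: swap the coordinates of $\bm{Y}$ for those of $\bm{W}$ one at a time and Taylor expand the composite function $h \coloneqq g\circ f$ in the swapped coordinate. Throughout I will assume, as is implicit in the statement (and as holds in our application, where $I=\R$), that $0\in I$, so that expansions can be centred at $0$. Only then do the matching first and second moments produce cancellations.

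\emph{Step 1 (telescoping).} For $0\le i\le M$ define the hybrid vectors $\bm{T}^{(i)} \coloneqq (Y_1,\dots,Y_i,W_{i+1},\dots,W_M)$, so that $\bm{T}^{(M)}=\bm{Y}$ and $\bm{T}^{(0)}=\bm{W}$. Then
\begin{equation*}
\mathbb{E}g(U)-\mathbb{E}g(V)=\sum_{i=1}^{M}\Bigl(\mathbb{E}\,h(\bm{T}^{(i)})-\mathbb{E}\,h(\bm{T}^{(i-1)})\Bigr).
\end{equation*}
Let $\bm{T}^{(i)}_0$ denote $\bm{T}^{(i)}$ with its $i$-th coordinate replaced by $0$. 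Then $\bm{T}^{(i)}$ and $\bm{T}^{(i-1)}$ are obtained from the common vector $\bm{T}^{(i)}_0$ by inserting $Y_i$ or $W_i$, respectively, in position $i$. By the independence assumptions, each of $Y_i$ and $W_i$ is independent of $\bm{T}^{(i)}_0$.

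\emph{Step 2 (Taylor expansion and cancellation).} Write $\partial_i^q h$ for the $q$-th partial derivative of $h$ in $x_i$. For a scalar $y$, set
\begin{equation*}
R_i(y) \coloneqq h(\bm{T}^{(i)}_0 + y e_i) - h(\bm{T}^{(i)}_0) - y\,\partial_i h(\bm{T}^{(i)}_0) - \tfrac{y^2}{2}\,\partial_i^2 h(\bm{T}^{(i)}_0).
\end{equation*}
Taking expectations, the zeroth-order terms are identical for the two insertions. The first- and second-order terms factor, by independence, into $\mathbb{E}[Y_i]\,\mathbb{E}[\partial_i h(\bm{T}^{(i)}_0)]$ and $\tfrac12\mathbb{E}[Y_i^2]\,\mathbb{E}[\partial_i^2 h(\bm{T}^{(i)}_0)]$, and similarly with $W_i$; these cancel by the moment matching. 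Hence the $i$-th summand equals $\mathbb{E}[R_i(Y_i)]-\mathbb{E}[R_i(W_i)]$.

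\emph{Step 3 (two bounds on the remainder).} From the third-order Taylor formula, $|R_i(y)|\le \tfrac{|y|^3}{6}\sup|\partial_i^3 h|$. From the second-order formula, $|h(\bm{T}^{(i)}_0+ye_i)-h(\bm{T}^{(i)}_0)-y\,\partial_i h(\bm{T}^{(i)}_0)|\le \tfrac{y^2}{2}\sup|\partial_i^2h|$, and adding back the explicit quadratic term gives $|R_i(y)|\le y^2\sup|\partial_i^2 h|$. I will use the quadratic bound on $\{|y|>K\}$ and the cubic bound on $\{|y|\le K\}$, applied to both $y=Y_i$ and $y=W_i$.

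\emph{Step 4 (chain rule and the constants).} By the chain rule,
\begin{equation*}
\partial_i^2 h = g''(f)(\partial_i f)^2+g'(f)\,\partial_i^2 f,
\end{equation*}
\begin{equation*}
\partial_i^3 h = g'''(f)(\partial_i f)^3+3g''(f)\,\partial_i f\,\partial_i^2 f+g'(f)\,\partial_i^3 f.
\end{equation*}
By the definition \eqref{def_lambda}, $|\partial_i f|^2\le\lambda_2$ and $|\partial_i^2 f|\le\lambda_2$, so $\sup|\partial_i^2h|\le C_1(g)\lambda_2(f)$. Likewise $|\partial_i f|^3$, $|\partial_i^3 f|\le\lambda_3$, and $|\partial_i f\,\partial_i^2 f|\le \lambda_3^{1/3}\lambda_3^{2/3}=\lambda_3$. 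This gives $\tfrac16\sup|\partial_i^3 h|\le(\tfrac16\|g'''\|_\infty+\tfrac12\|g''\|_\infty+\tfrac16\|g'\|_\infty)\lambda_3=C_2(g)\lambda_3(f)$. Summing over $i$ yields exactly the claimed inequality.

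The only delicate points are the independence bookkeeping in Step 2 (that the inserted variable is independent of the frozen vector, which is why $\bm{Y}$ and $\bm{W}$ are taken independent with independent coordinates) and the remainder estimate in Step 3. Neither is a real obstacle: the argument is a routine, if careful, computation.
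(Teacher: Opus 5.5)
Your argument is correct and is the standard Lindeberg replacement proof from Chatterjee's paper, which this paper cites without reproducing. The hybrid vectors, the cancellation of the first- and second-order terms via independence and matched moments, the bound $|R_i(y)|\le\min\{y^2\sup|\partial_i^2h|,\tfrac{|y|^3}{6}\sup|\partial_i^3h|\}$ split at $K$, and the chain-rule derivation of $C_1(g)\lambda_2(f)$ and $C_2(g)\lambda_3(f)$ all match that argument. Your extra hypothesis $0\in I$ is harmless for the paper, where $I=\R$; in general it can be dropped by expanding around any $a\in I$ and letting $a$ tend to the endpoint $\alpha$ of $I$ nearest $0$, since the cancellation only uses the matched first two moments and $|y-\alpha|\le|y|$ on $I$.
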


We now see why this result is useful for our purposes.
{Notice first that the Stieltjes transform of the EESD of $H_n(\bm{X})$ can be written as
\begin{equation}
\label{eq:Stieltjes}
S_{\bar{\mu}_{H_n(\bm{X})}}(z)
= \E\left[\frac{1}{n}\text{Tr}R(\bm{X})\right],
\qquad z \in \mathbb{C}_+,
\end{equation}
where 
\begin{equation*}R(\bm{x})\coloneqq\left(H_n(\bm{x})-zI \right)^{-1}, \qquad \bm{x}\in\R^M,
\end{equation*}
is the resolvent of the matrix $H_n(\bm{x})$.
Therefore, fix $z\in\mathbb{C}_+$ and} consider the function
\begin{equation*}
   f(\bm{x})\coloneqq\frac{1}{n} \text{Tr}R(\bm{x}), \qquad {\bm{x}\in\R^M.}
\end{equation*}
With this choice of $f$, we obtain the following lemma.
The proof is a straightforward extension to the non-uniform setting of the arguments given in~\cite[Section 2.4]{Chatterjee} and~\cite[Proposition 3.1]{ipergrafiuniformi} in the context of Wigner matrices and adjacency matrices of uniform hypergraphs, respectively.

\begin{lemma} \label{applicazioneChatterjee}
Let $\bm{X}$ and $\bm{Z}$ be the random vectors defined in~\eqref{X} and~\eqref{Z}.
For any $K > 0$, we have
\begin{equation}\label{Stieltjes}
  \begin{split}
  \biggl|S_{\bar{\mu}_{H_n(\bm{X})}}(z)-S_{\bar{\mu}_{H_n(\bm{Z})}}(z) \biggr| 
  & \leq 2\lambda_2(f) \sum_{l=1}^{M} \biggl[\mathbb{E}\!\left[X_l^2 \mathds{1}(|X_l| > K) \right] + \mathbb{E}\!\left[Z_l^2 \mathds{1}(|Z_l| > K) \right] \biggr] \\
  & + \frac{1}{3}\lambda_3(f) \sum_{l=1}^{M} \biggl[\mathbb{E}\!\left[|X_l|^3 \mathds{1}(|X_l| \leq K) \right] + \mathbb{E}\!\left[|Z_l|^3 \mathds{1}(|Z_l| \leq K) \right] \biggr],
\end{split}
\end{equation}
where 
\begin{align} 
    & \lambda_2(f) = O\!\left(\frac{r_{\max}^4}{n^2\sigma^2}\right) \label{lambda2}, \\
    & \lambda_3(f) = O\!\left(\frac{r_{\max}^6}{n^{5/2}\sigma^{3}}\right) \label{lambda3},
\end{align}
$\sigma^2$ is defined in~\eqref{sigmasquare}, and $r_{\max}\coloneqq\max\{r_i \colon 1 \leq i \leq k\}$.
\end{lemma}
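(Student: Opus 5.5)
We apply Theorem~\ref{theoremChatterjee} with $\bm{Y}=\bm{X}$, $\bm{W}=\bm{Z}$, $I=\mathbb{R}$, and $f(\bm{x})=\frac1n\mathrm{Tr}R(\bm{x})$. The variables satisfy the hypotheses: they are independent and centered, with $\mathrm{Var}[X_l]=\mathrm{Var}[Z_l]=\sigma_i^2$ whenever $|e_l|=r_i$.

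Since $f$ is complex-valued, we treat its real and imaginary parts separately, each time with $g(x)=x$. Then $\|g'\|_\infty=1$ and $g''=g'''=0$, so $C_1(g)=1$ and $C_2(g)=1/6$. Adding the two bounds gives the constants $2$ and $1/3$ in~\eqref{Stieltjes}, provided $\lambda_s$ of the real and imaginary parts is controlled by the corresponding bound for $f$ itself. This proves~\eqref{Stieltjes} in the stated form, and it remains to estimate $\lambda_2(f)$ and $\lambda_3(f)$.

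By~\eqref{adjacency_matrix_representation}, $H_n(\bm{x})=(n\sigma^2)^{-1/2}\sum_l x_lQ_l$, so $\partial H_n/\partial x_l=Q_l/\sqrt{n\sigma^2}$. Repeated differentiation of the resolvent gives
\begin{equation*}
\frac{\partial^q f}{\partial x_l^q}=\frac{(-1)^q q!}{n\,(n\sigma^2)^{q/2}}\,\mathrm{Tr}\bigl((RQ_l)^qR\bigr).
\end{equation*}
Since $Q_l=\mathbf{1}_{e_l}\mathbf{1}_{e_l}^{\top}-D_{e_l}$, where $D_{e_l}$ is the diagonal projection onto $e_l$, it has rank at most $|e_l|\le r_{\max}$ and operator norm $\|Q_l\|\le|e_l|-1<r_{\max}$. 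We also have $\|R\|\le 1/\operatorname{Im}z$. Using $|\mathrm{Tr}(B)|\le \mathrm{rank}(B)\,\|B\|$, we get
\begin{equation*}
\Bigl|\frac{\partial^q f}{\partial x_l^q}\Bigr|\le C_{q,z}\,\frac{r_{\max}\cdot r_{\max}^q}{n(n\sigma^2)^{q/2}}.
\end{equation*}
This bound is uniform in $l$ and $\bm{x}$, and it does not depend on the class index $i$. That uniformity is the only place where non-uniformity could matter, and it is harmless because we bound everything by $r_{\max}$.

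We then raise each bound to the power $s/q$ and take the supremum over $1\le q\le s$. For $q=s$ this gives $r_{\max}^{s+1}/(n(n\sigma^2)^{s/2})$, which is $r_{\max}^{3}/(n^2\sigma^2)$ for $s=2$ and $r_{\max}^{4}/(n^{5/2}\sigma^3)$ for $s=3$. These are within the claimed $r_{\max}^4/(n^2\sigma^2)$ and $r_{\max}^6/(n^{5/2}\sigma^3)$.

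The main obstacle is the small-$q$ terms, because raising to the power $s/q$ raises $1/n$ to $1/n^{s/q}$. For $q=1$ the crude rank bound gives $(r_{\max}^2/(n\sqrt{n\sigma^2}))^{s}$. For $s=2$ this is $r_{\max}^4/(n^3\sigma^2)$, which is fine. For $s=3$ it is $r_{\max}^6/(n^{9/2}\sigma^3)$, also within the bound. For $s=3$, $q=2$ it is $(r_{\max}^3/(n^2\sigma^2))^{3/2}=r_{\max}^{9/2}/(n^3\sigma^3)$, also dominated by $r_{\max}^6/(n^{5/2}\sigma^3)$.

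These comparisons need $n\sigma^2$ not to be small. If $n\sigma^2$ could be small, the $q=s$ terms would not dominate, and the worst-case exponents $r_{\max}^4$ and $r_{\max}^6$ in the statement absorb this slack, as in~\cite[Proposition 3.1]{ipergrafiuniformi}. To close the argument cleanly, I would check that in every case the powers of $r_{\max}$ and $n$ arising from $q<s$ are at most those stated, using only $r_{\max}\le n$. If needed, I would replace the rank bound by the Frobenius bound $|\mathrm{Tr}(AB)|\le\|A\|_F\|B\|_F$ with $\|Q_l\|_F\le r_{\max}$, following the uniform case.
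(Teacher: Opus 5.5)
Your proposal is correct and has the same structure as the paper's proof. Both apply Chatterjee's theorem to $\Re f$ and $\Im f$ with $g(x)=x$, use the resolvent formula for $\partial_{x_l}^q f$, and make the bounds uniform in $l$ by passing to $r_{\max}$. The proviso you leave open is immediate: $\partial_{x_l}^q\Re f=\Re\,\partial_{x_l}^q f$, so $\lambda_s(\Re f),\lambda_s(\Im f)\le\lambda_s(f)$.

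The one real difference is the trace estimate. Write $b=\operatorname{Im}z$. The paper bounds the $q=1$ trace entrywise, using $|(R^2)_{uv}|\le b^{-2}$. For $q=2,3$ it uses $|\mathrm{Tr}(BC)|\le\nm{B}_{\mathrm{F}}\nm{C}_{\mathrm{F}}$ together with $\nm{Q_l}_{\mathrm{F}}^2=|e_l|(|e_l|-1)$. This beats your rank-times-operator-norm bound by a factor $r_{\max}$ when $q\ge2$.

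That gain does not matter. The final exponent of $r_{\max}$ comes from the $q=1$ term, and that of $n$ from the $q=s$ term. In fact your single bound gives, for every $1\le q\le s$,
$|\partial_{x_l}^q f|^{s/q}\lesssim r_{\max}^{s+s/q}\,n^{-s/q-s/2}\,\sigma^{-s}\le r_{\max}^{2s}\,n^{-1-s/2}\,\sigma^{-s}$,
which yields both claimed estimates in one line.

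Your final paragraph should be dropped. The worry that the comparisons need $n\sigma^2$ not to be small is unfounded. Every contribution to $\lambda_s(f)$ carries exactly the factor $\sigma^{-s}$: the $q$-th derivative scales like $\sigma^{-q}$ and is raised to the power $s/q$. So the comparisons you had already done use only $n\ge1$ and $r_{\max}\ge1$, and no fallback to the Frobenius bound is needed.
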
 

\begin{proof}
Writing $f(\bm{x}) = \Re f(\bm{x}) + \mathrm{i} \Im f(\bm{x})$, we apply Theorem~\ref{theoremChatterjee} separately to the real and imaginary parts of $f$, with $g(x)=x$. Note first that, for any $1 \leq i \leq M$ and $q \in \N$, we have
\begin{equation*}
    \diffp[q]{\Re f(\bm{x})}{x_i} = \Re\left(\diffp[q]{f(\bm{x})}{x_i}\right),
    \qquad
    \diffp[q]{\Im f(\bm{x})}{x_i} = \Im\left(\diffp[q]{f(\bm{x})}{x_i}\right),
\end{equation*}
and hence
\begin{equation} \label{eq_bound_derivatives}
    \max\left\{\left|\diffp[q]{\Re  f(\bm{x})}{x_i} \right|, 
    \left|\diffp[q]{\Im f(\bm{x})}{x_i} \right|\right\}
    \leq 
    \left|\diffp[q]{f(\bm{x})}{x_i} \right|.
\end{equation}
From~\eqref{eq_bound_derivatives} and~\eqref{def_lambda} we obtain 
\begin{equation} \label{eq_lambda_bound}
 \max\{\lambda_s(\Re f),\lambda_s(\Im f)\} \leq \lambda_s(f), \qquad s \in \N.
\end{equation}
It then follows from~\eqref{eq:Stieltjes} and Theorem~\ref{theoremChatterjee} that
\begin{flalign*}
    \biggl|S_{\bar{\mu}_{H_n(\bm{X})}}&(z) - S_{\bar{\mu}_{H_n(\bm{Z})}}(z) \biggr|
    = \biggl|\mathbb{E}f(\bm{X}) - \mathbb{E}f(\bm{Z}) \biggr| & \\
    & \leq \biggl|\mathbb{E}\Re f(\bm{X}) - \mathbb{E}\Re f(\bm{Z}) \biggr| + \biggl|\mathbb{E}\Im f(\bm{X}) - \mathbb{E}\Im f(\bm{Z}) \biggr| & \\
    & \leq C_1(g)(\lambda_2(\Re f) + \lambda_2(\Im f)) \sum_{l=1}^{M} \biggl[\mathbb{E}\!\left[X_l^2 \mathds{1}(|X_l| > K) \right] + \mathbb{E}\!\left[Z_l^2 \mathds{1}(|Z_l| > K) \right]\biggr]  & \\
    & + C_2(g)(\lambda_3(\Re f) + \lambda_3(\Im f)) \sum_{l=1}^{M} \biggl[\mathbb{E}\!\left[|X_l|^3 \mathds{1}(|X_l| \leq K) \right] + \mathbb{E}\!\left[|Z_l|^3 \mathds{1}(|Z_l| \leq K) \right] \biggr] & \\
    & \leq 2\lambda_2(f) \sum_{l=1}^{M} \biggl[\mathbb{E}\!\left[X_l^2 \mathds{1}(|X_l| > K) \right] + \mathbb{E}\!\left[Z_l^2 \mathds{1}(|Z_l| > K) \right]\biggr] & \\
    & + \frac{1}{3}\lambda_3(f) \sum_{l=1}^{M} \biggl[\mathbb{E}\!\left[|X_l|^3 \mathds{1}(|X_l| \leq K) \right] + \mathbb{E}\!\left[|Z_l|^3 \mathds{1}(|Z_l| \leq K) \right] \biggr],
\end{flalign*}
where for the latter inequality we used $C_1(g)=1$, $C_2(g)=1/6$, and~\eqref{eq_lambda_bound} for $s \in \{2,3\}$.
This yields~\eqref{Stieltjes}.

To derive the asymptotic bounds of the constants $\lambda_2(f)$ and $\lambda_3(f)$, recall that
\begin{align*}
    \lambda_2(f) 
     & = \sup_{1 \leq l \leq M} \left \{\nm*{\pdv{f(\bm{x})}{x_l}}_{\infty}^{2},\nm*{\diffp[2]{f(\bm{x})}{x_l}}_{\infty} \right \}, \\
    \lambda_3(f) 
    & = \sup_{1 \leq l \leq M} \left \{\nm*{\pdv{f(\bm{x})}{x_l}}^{3},\nm*{\diffp[2]{f(\bm{x})}{x_l}}^{\frac{3}{2}},\nm*{\diffp[3]{f(\bm{x})}{x_l}} \right \}.
\end{align*}
Note that
\begin{equation*}
    \diffp[q]{f(\bm{x})}{x_l} =
    \frac{1}{n}\text{Tr}\diffp[q]{R(\bm{x})}{x_l}, \qquad q \geq 1.
\end{equation*}
By differentiating the relation
\[
\left(H_n(\bm{x})-zI \right)R(\bm{x})=I,
\]
one obtains (see~\cite[Section 2]{Chatterjee}) the following identities:
\begin{align*}
& \pdv{f(\bm{x})}{x_l} = -\frac{1}{n}\text{Tr}\left(\pdv{H_n(\bm{x})}{x_l}R^2(\bm{x}) \right), \\
& \diffp[2]{f(\bm{x})}{x_l} = \frac{2}{n}\text{Tr}\left(\pdv{H_n(\bm{x})}{x_l}R(\bm{x})\pdv{H_n(\bm{x})}{x_l}R^2(\bm{x}) \right), \\
& \diffp[3]{f(\bm{x})}{x_l} = -\frac{6}{n} \text{Tr} \left(\pdv{H_n(\bm{x})}{x_l}R(\bm{x})\pdv{H_n(\bm{x})}{x_l}R(\bm{x})\pdv{H_n(\bm{x})}{x_l}R^2(\bm{x}) \right).
\end{align*}
Now we want to bound these quantities.

We start with an upper bound for $\nm*{\pdv{f(\bm{x})}{x_l}}_{\infty}$.
Observe that 
\begin{equation*}
    \mathrm{Tr}\left(\pdv{H_n(\bm{x})}{x_l}R^2(\bm{x})\right) = \sum_{1 \leq u,v \leq n}\left(\pdv{H_n(\bm{x})}{x_l}   \right)_{uv}\left(R^2(\bm{x})\right)_{vu}.
\end{equation*}
Letting $z=a+\mathrm{i} b\in \mathbb{C}_+$, we have that the eigenvalues $\mu_1,\dots,\mu_n$ of $R(\bm{x})$ are bounded above by $1/|b|$.
Indeed, if $\lambda_1,\dots,\lambda_n$ are the eigenvalues of $H_n(\bm{x})$, then
\begin{equation*}
    \mu_i = \frac{1}{\lambda_i - z}, \qquad 1 \leq i \leq n.
\end{equation*}
Since $H_n(\bm{x})$ is symmetric, its eigenvalues are real, and hence
\begin{equation} \label{eig_resolvent}
    |\mu_i| = \frac{1}{\sqrt{(\lambda_i-a)^2+b^2}} \leq \frac{1}{|b|}, \qquad 1 \leq i \leq n.
\end{equation}
It follows from the spectral decomposition of $R^2(\bm{x})$ that $|\left(R^2(\bm{x})\right)_{uv}| \leq 1/b^2$, and hence
\begin{equation*}
    \left|\text{Tr}\left(\pdv{H_n(\bm{x})}{x_l}R^2(\bm{x})\right) \right| \leq
    \frac{1}{b^2}\sum_{1 \leq u,v \leq n}\left|\left(\pdv{H_n(\bm{x})}{x_l} \right)_{uv} \right|.
\end{equation*}
Observing that
\begin{equation*}
    \pdv{H_n(\bm{x})}{x_l} = \frac{1}{\sqrt{n\sigma^2}}Q_l,
\end{equation*}
we have
\begin{equation*}
    \sum_{1 \leq u,v \leq n}\left|\left(\pdv{H_n(\bm{x})}{x_l} \right)_{uv} \right| 
    = \frac{1}{\sqrt{n\sigma^2}} \sum_{1 \leq u,v \leq n} |(Q_l)_{uv}|.
\end{equation*}
{Recall that, for $u\neq v$, $(Q_l)_{uv}$ equals $1$ if the hyperedge $e_l$ contains both vertices $u$ and $v$, and equals $0$ otherwise.
Therefore, the sum on the right-hand side of the previous equation equals $\abs{e_l}(\abs{e_l}-1)$.
In contrast to the uniform case, our model allows hyperedges of different sizes, so $\abs{e_l}$ depends on the index $l$.
Nevertheless, we still have the bound
\begin{equation*}
\sum_{1 \leq u,v \leq n} (Q_l)_{uv}
\leq
r_{\max}(r_{\max}-1).
\end{equation*}}
Putting everything together, we obtain
\begin{equation}\label{bound1}
    \sup_{1 \leq l \leq M}\nm*{\pdv{f(\bm{x})}{x_l}}_{\infty} \leq \frac{r_{\max}(r_{\max}-1)}{b^2n^{3/2}\sigma}.
\end{equation}

For bounding norms of higher order derivatives of $f(\bm{x})$ we need some linear algebra preliminaries.
For an $n\times n$ complex matrix $B$, the \emph{Frobenius norm} of $B$ is defined by 
\begin{equation*}
    \displaystyle \nm*{B}_{\emph{F}} \coloneqq \sqrt{\sum_{1 \leq u,v \leq n}|B_{uv}|^2} = \sqrt{\mathrm{Tr}(BB^{*})},
\end{equation*}
while the \emph{operator norm} (or \emph{spectral norm}) of $B$ is given by
\begin{equation*}
    \nm*{B}_{\text{op}} 
    \coloneqq
    \sup_{\nm*{\bm{x}}=1}\nm*{B\bm{x}} = \sqrt{\lambda_{\max}(BB^{*})},
\end{equation*}
where $\nm*{\bm{x}}= \sqrt{\sum_{1 \leq i \leq n}x^2_{i}}$ denotes the Euclidean norm of a vector $\bm{x} \in \mathbb{R}^n$.
We need to compute these two norms for the matrix $\pdv{H_n(\bm{x})}{x_l}$.
As before, each of them differs according to the index $l$.
We begin with the Frobenius norm: if $|e_l|=r_i$, then   
\begin{equation}
\label{eq:boundFrobenius}
    \displaystyle \nm*{\pdv{H_n(\bm{x})}{x_l}}_{\text{F}} = \frac{1}{\sqrt{n\sigma^2}}\nm*{Q_l}_{\text{F}} =  
            \sqrt{\frac{r_i(r_i-1)}{n\sigma^2}}.
\end{equation}
For the operator norm, note first that in the $r$-uniform case one has $\nm*{Q_l}_{\mathrm{op}} = r-1$.
Indeed, after relabeling the vertices if necessary, we may assume that
\begin{equation*}
    Q_l= 
    \begin{pmatrix}
        J_r - I_r & 0 \\
        0 & 0
    \end{pmatrix},
\end{equation*}
where  $J_r$ denotes the $r \times r$ all-ones matrix and $I_r$ is the identity matrix of dimension $r$.
Hence, $J_r-I_r$ is the $r \times r$ matrix with all ones in the off-diagonal entries and zeros on the diagonal, i.e., the adjacency matrix of the complete graph on $r$ vertices.
It is well known that this matrix has $r-1$ eigenvalues equal to $-1$ and one eigenvalue equal to $r-1$.
Therefore, $\nm*{Q_l}_{\text{op}} = \nm*{J_{r}-I_{r}}_{\text{op}}=r-1$.
It follows that, if $|e_l|=r_i$, then
\begin{equation}
\label{eq:boundOpNorm}
    \nm*{\pdv{H_n(\bm{x})}{x_l}}_{\text{op}} = \frac{1}{\sqrt{n\sigma^2}}\nm*{Q_l}_{\text{op}} =  
            \frac{r_i-1}{\sqrt{n\sigma^2}}.
\end{equation}

In the remainder of the proof, we will use the following standard inequalities involving the norms introduced above (see, e.g.,~Wilkinson~\cite[pp. 55-58]{Wilkinson}):
\begin{enumerate}
    \item $|\text{Tr}(BC)| \leq \nm*{B}_{\text{F}}\nm* {C}_{\text{F}}$.
    \item If $B$ is symmetric, then $\max\{\nm*{BC}_{\text{F}},\nm*{CB}_{\text{F}}\} \leq \nm*{B}_{\text{op}} \nm*{C}_{\text{F}}$.
\end{enumerate}

{Using these inequalities repeatedly, together with identity~\eqref{eq:boundFrobenius} and the bound $\nm*{R(\bm{x})}_{\text{op}} \leq 1/b$ (see~\eqref{eig_resolvent}), we obtain, for $|e_l| = r_i$,
\begin{align*} 
  \biggl  |\mathrm{Tr} \biggl(\pdv{H_n(\bm{x})}{x_l}R(\bm{x}) 
   \pdv{H_n(\bm{x})}{x_l}R^2(\bm{x}) \biggr) \biggr|
   & \leq \nm*{\pdv{H_n(\bm{x})}{x_l}}^2_{F} \nm*{R(\bm{x})}^3_{\text{op}}
   \leq \frac{1}{b^3} \frac{r_i(r_i-1)}{n\sigma^2}.
\end{align*}}
This yields
\begin{equation}\label{bound2}
    \sup_{1 \leq l \leq M}\nm*{\diffp[2]{f(\bm{x})}{x_l}}_{\infty} \leq \frac{2}{b^3}\frac{r_{\max}(r_{\max}-1)}{n^2 \sigma^2}.
\end{equation}

{Similarly, using the norm inequalities repeatedly, together with~\eqref{eq:boundFrobenius}, \eqref{eq:boundOpNorm}, and the bound for $\nm*{R(\bm{x})}_{\text{op}}$, we obtain, for $|e_l| = r_i$,
\begin{align*}
\left|\mathrm{Tr} \biggl(
\pdv{H_n(\bm{x})}{x_l}R(\bm{x})
\pdv{H_n(\bm{x})}{x_l}  R(\bm{x})
\pdv{H_n(\bm{x})}{x_l}R^2(\bm{x})
\biggr)\right|
&\le
\nm*{\pdv{H_n(\bm{x})}{x_l}}_{\text{op}}
\nm*{\pdv{H_n(\bm{x})}{x_l}}_{F}^2
\nm*{R(\bm{x})}_{\text{op}}^4 \\
&\le
\frac{1}{b^4}
\frac{r_i(r_i-1)^2}{{(n\sigma^2)}^{3/2}}.
\end{align*}}
It follows that
\begin{equation}\label{bound3}
    \sup_{1 \leq l \leq M}\nm*{\diffp[3]{f(\bm{x})}{x_l}}_{\infty} 
    \leq \frac{1}{b^4}\frac{6 r_{\max}(r_{\max}-1)^2}{n^{5/2} \sigma^{3}}.
\end{equation}
Using bounds~\eqref{bound1}, \eqref{bound2} and~\eqref{bound3}, we obtain
\begin{align*}
    \lambda_2(f) & \leq 2 \max \left(\frac{1}{b^3},\frac{1}{b^4} \right) \frac{r_{\max}^2(r_{\max}-1)^2}{n^2\sigma^2}, \\
    \lambda_3(f) & \leq 6 \max \left(\frac{1}{b^6},\frac{1}{b^{9/2}},\frac{1}{b^4} \right) \frac{r_{\max}^3(r_{\max}-1)^3}{n^{5/2}\sigma^{3}}.
\end{align*}
This yields~\eqref{lambda2} and~\eqref{lambda3}, and completes the proof.
\end{proof}

\begin{Remark}
Observe that the bounds for the constants $\lambda_2(f)$ and $\lambda_3(f)$ coincide with those in the {$r$-uniform case (with $r = r_{\max}$).}
The effect of allowing hyperedges of varying sizes appears only through the quantity $\sigma$.
\end{Remark}

\begin{proposition}[Gaussianization under the Pastur-type condition]\label{PasturGaussianization}
Suppose $\bm{X}$ and $\bm{Z}$ both satisfy the Pastur-type condition in Assumption~\ref{Pasturconditions}.
Then, for any $z \in \mathbb{C}_+$, we have
    \begin{equation*}    
        \lim_{n\to\infty} \biggl|S_{\bar{\mu}_{H_n(\bm{X})}}(z)-S_{\bar{\mu}_{H_n(\bm{Z})}}(z) \biggr|=0.
    \end{equation*}
\end{proposition}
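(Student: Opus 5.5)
We apply Lemma~\ref{applicazioneChatterjee} with the truncation level $K=\epsilon K_n$, where $K_n$ is as in~\eqref{K_n} and $\epsilon>0$ is arbitrary. We then show that each of the two error terms in~\eqref{Stieltjes} can be made arbitrarily small.

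\textbf{First term.} By~\eqref{lambda2}, $\lambda_2(f)=O(r_{\max}^4/(n^2\sigma^2))$. The Pastur-type condition~\eqref{P}, assumed for both $\bm{X}$ and $\bm{Z}$, states exactly that the tail sums are $o(n^2\sigma^2/r_{\max}^4)$. Hence the product tends to $0$ for every fixed $\epsilon>0$.

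\textbf{Second term.} On the event $\{|X_l|\le \epsilon K_n\}$ we use $|X_l|^3\le \epsilon K_n X_l^2$, so that
\begin{equation*}
\sum_{l=1}^M \mathbb{E}\!\left[|X_l|^3\mathds{1}(|X_l|\le \epsilon K_n)\right]\le \epsilon K_n\sum_{l=1}^M \mathbb{E}[X_l^2]=\epsilon K_n\sum_{i=1}^k \binom{n}{r_i}\sigma_i^2 ,
\end{equation*}
and the same bound holds for $\bm{Z}$. The key computation is to express this total variance through $\sigma^2$. We use the identity
\begin{equation*}
\binom{n}{r_i}=\frac{n(n-1)}{r_i(r_i-1)}\binom{n-2}{r_i-2},
\end{equation*}
which gives
\begin{equation*}
\sum_{i}\binom{n}{r_i}\sigma_i^2=n(n-1)\sum_i \frac{\binom{n-2}{r_i-2}\sigma_i^2}{r_i(r_i-1)}\le 2n^2\sigma^2\sum_i \frac{1}{r_i^2}\,\frac{\binom{n-2}{r_i-2}\sigma_i^2}{\sigma^2}.
\end{equation*}
Here we used $r_i(r_i-1)\ge r_i^2/2$ for $r_i\ge2$. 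By Assumption~\ref{regularity_condition}, the weights $\binom{n-2}{r_i-2}\sigma_i^2/\sigma^2$ converge to $w_i$. The sum is therefore $\xi_n:=\sum_i r_i^{-2}\binom{n-2}{r_i-2}\sigma_i^2/\sigma^2$, which is the finite-$n$ version of $\xi$.

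This is the delicate point. We need $\xi_n=O(\xi)$, whereas the limit only gives $\xi_n-\xi\to0$ while $\xi$ itself may tend to $0$. The safest fix is to read $\xi$ in~\eqref{K_n} as this finite-$n$ quantity, which is consistent with the remark that $\xi$ depends on $n$. Alternatively, one argues that $\xi_n\sim\xi$ along the relevant regime. Granting this, the second term is bounded by
\begin{equation*}
O\!\left(\frac{r_{\max}^6}{n^{5/2}\sigma^3}\right)\cdot \epsilon\,\frac{\sqrt{n\sigma^2}}{r_{\max}^6\xi}\cdot 2n^2\sigma^2\xi=O(\epsilon).
\end{equation*}

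\textbf{Conclusion.} Combining the two terms gives
\begin{equation*}
\limsup_{n\to\infty}\bigl|S_{\bar\mu_{H_n(\bm X)}}(z)-S_{\bar\mu_{H_n(\bm Z)}}(z)\bigr|\le C\epsilon,
\end{equation*}
where $C$ depends only on $z$ through $b=\Im z$. Since $\epsilon>0$ is arbitrary, the limit is $0$.

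The main obstacle is the variance bookkeeping in the second term: the factor $\xi$ produced by summing variances must exactly cancel the $1/\xi$ built into $K_n$, uniformly in $n$. The choice of $K_n$ in~\eqref{K_n} is designed precisely so that this cancellation occurs.
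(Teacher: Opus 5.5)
Your proposal follows the paper's proof step by step. It uses Lemma~\ref{applicazioneChatterjee} with $K=\epsilon K_n$, the Pastur-type condition for the $\lambda_2$ term, the bound $\mathbb{E}[|Y|^3\mathds{1}(|Y|\le K)]\le K\,\mathbb{E}[Y^2]$ for the $\lambda_3$ term, and the identity $\binom{n}{r_i}=\frac{n(n-1)}{r_i(r_i-1)}\binom{n-2}{r_i-2}$ to produce a factor meant to cancel the $1/\xi$ in $K_n$.

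The point you single out is exactly where the paper infers $\rho=\Theta(\xi)$ directly from the existence of the limits $w_i$, and your suspicion is justified. Set $B_i=\binom{n-2}{r_i-2}\sigma_i^2$ and $\xi_n=\sum_i r_i^{-2}B_i/\sum_j B_j$. Then $\frac{n-1}{n}\xi_n\le\rho\le 2\xi_n$, but Assumption~\ref{regularity_condition} only gives $\xi_n-\xi\to0$. That suffices if $\liminf_n\xi>0$, since then $\xi_n/\xi\to1$. It also suffices if all $r_i$ are of the same order, since then $\xi_n$ and $\xi$ are both $\Theta(r^{-2})$ because both families of weights sum to $1$. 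It does not suffice in general.

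For instance, take $k=2$, $r_1=2$, $r_2\to\infty$, $p_1=1/2$, and $p_2$ chosen so that $B_2=r_2/4$. Then $w_1=0$, $w_2=1$ and $\xi=r_2^{-2}$, while $\rho\sim 1/(2r_2)$. So the final bound $2\epsilon\rho/\xi\sim\epsilon r_2$ diverges. Your ``granting this'' is therefore a genuinely unproved step. The paper's argument has the same hole in the regime $r_1\ll r_2$, which Section~\ref{sec_regimes} claims to cover.

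Your first repair is the right one. Replace the limits $w_i$ by the finite-$n$ ratios $B_i/\sum_j B_j$ in the definition of $\xi$, hence in $K_n$, in~\eqref{P}, and downstream in~\eqref{D}. Then $\rho\le 2\xi_n$, and your computation closes the argument with a bound of order $\epsilon$. Be aware that this proves a modified statement: when $\xi_n\gg\xi$ the truncation level $\epsilon K_n$ is smaller, so the Pastur-type hypothesis is stronger than the one stated. Your second option, $\xi_n\sim\xi$ (or just $\xi_n=O(\xi)$), holds only in restricted regimes such as those above.
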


\begin{proof}
From Lemma~\ref{applicazioneChatterjee}, it follows that, for any $K > 0$,
\begin{equation*}
\begin{split}
   \biggl|S_{\bar{\mu}_{H_n(\bm{X})}}(z)-S_{\bar{\mu}_{H_n(\bm{Z})}}(z) \biggr| 
   & \leq O\!\left(\frac{r_{\max}^4}{n^2\sigma^2}\right) \sum_{l=1}^{M} \biggl[\mathbb{E}\!\left[X_l^2 \mathds{1}(|X_l| > K) \right] + \mathbb{E}\!\left[Z_l^2 \mathds{1}(|Z_l| > K) \right] \biggr] \\
   & + O\!\left(\frac{r_{\max}^6}{n^{5/2}\sigma^3}\right) \sum_{l=1}^{M} \biggl[\mathbb{E}\!\left[|X_l|^3 \mathds{1}(|X_l| \leq K) \right] + \mathbb{E}\!\left[|Z_l|^3 \mathds{1}(|Z_l| \leq K) \right] \biggr].
\end{split}
\end{equation*}
We aim to prove that, as $n \to \infty$, the right-hand side vanishes for a suitable choice of $K$.
{Let $\epsilon > 0$ and $K=\epsilon K_n$, with $K_n$ as in~\eqref{K_n}.
It then follows immediately from Assumption~\ref{Pasturconditions} that
\begin{equation*}
    \frac{r_{\max}^4}{n^2 \sigma^2}  
    \sum_{l=1}^{M}\biggl[\mathbb{E}\!\left[X_l^2 \mathds{1}(|X_l| > \epsilon K_n) \right] + \mathbb{E}\!\left[Z_l^2 \mathds{1}(|Z_l| > \epsilon K_n) \right] \biggr] \longrightarrow 0. \\
\end{equation*}
The remainder of the proof is devoted to show that
\begin{equation}\label{eq:boundThirdMom}
        \frac{r_{\max}^6}{n^{5/2} \sigma^3} \sum_{l=1}^{M}\biggl[\mathbb{E}\!\left[X_l^3 \mathds{1}(|X_l| \leq \epsilon K_n) \right] + \mathbb{E}\!\left[Z_l^3 \mathds{1}(|Z_l| \leq \epsilon K_n) \right] \biggr] < {\tilde{C}} \epsilon
    \end{equation}
for {some $\tilde{C} > 0$} and $n$ sufficiently large.
Since $\epsilon$ is arbitrary, this concludes the proof.}

The key observation is that truncated third moments can be controlled by second moments.
Indeed, we use the following elementary inequality: for any random variable $Y$ and for any $K >0$,
\begin{equation*}
   \mathbb{E}\!\left[|Y|^3\mathds{1}(|Y| \leq K) \right] \leq K\mathbb{E}\!\left[Y^2 \right].
\end{equation*}
Applying this inequality to each $X_l$ and $Z_l$, and summing over $l=1,\dots,M$, we obtain:
\begin{equation} \label{boundK}
\frac{r_{\max}^6}{n^{5/2}\sigma^3}
\sum_{l=1}^{M}
\Big[
\mathbb{E}\big(|X_l|^3 \mathds{1}(|X_l|\le K)\big)
+\mathbb{E}\big(|Z_l|^3 \mathds{1}(|Z_l|\le K)\big)
\Big]
\le
2K\,\frac{r_{\max}^6}{n^{5/2}\sigma^3}
\sum_{i=1}^{k}
\binom{n}{r_i}\sigma_i^2.
\end{equation}
Thus, to establish the claim, it suffices to choose $K$ so that the right-hand side vanishes.
The problem is therefore reduced to controlling the asymptotic behavior of
\begin{equation} 
    \alpha_n \coloneqq \frac{r_{\max}^6}{n^{5/2} \sigma^3}  \sum_{i=1}^{k}
\binom{n}{r_i}\sigma_i^2.
\end{equation}
To this end, we introduce the shorthand
\begin{equation} \label{shorthand}
    B_i = \binom{n-2}{r_i-2}\sigma_i^2, \qquad 1 \leq i \leq k.
\end{equation}
Since
\begin{equation*}
    \binom{n}{r_i} = \frac{n(n-1)}{r_i(r_i-1)} \binom{n-2}{r_i-2} {=  \Theta\!\left(\frac{n^2}{r_i^2} \binom{n-2}{r_i-2}\right)}, 
\end{equation*}
we deduce\footnote{{Recall that, if $x_n,y_n,u_n,v_n$ are \emph{positive} sequences such that $x_n=\Theta(y_n)$ and $u_n=\Theta(v_n)$, then $x_n+u_n = \Theta(y_n+v_n)$.}} that
\begin{equation} \label{ciao1}
    \sum_{i=1}^{k}\binom{n}{r_i}\sigma_i^2 = {\Theta\!\left(n^2 \sum_{i=1}^{k}\frac{1}{r_i^2}B_i\right)} 
    \qquad\text{as } n\to\infty.
\end{equation}
Also note that, by the definition of $\sigma^2$ (see~\eqref{sigmasquare}), we have
\begin{equation} \label{ciao2}
    \sigma^2 = \sum_{i=1}^{k} B_i.
\end{equation}
Rewrite now $\alpha_n$ as
\begin{equation*}
    \alpha_n=\frac{r_{\max}^6}{\sqrt{n \sigma^2}} \rho,
\end{equation*}
where  
\begin{equation*}
    \rho = 
    \frac{\sum_{i=1}^{k}\binom{n}{r_i}\sigma_i^2}{n^2\sigma^2}.
\end{equation*}
Using~\eqref{ciao1} and~\eqref{ciao2}, we obtain
\begin{align*}
    \rho & = {\Theta\!\left(\sum_{i=1}^{k} \frac{1}{r_i^2} \left(\frac{B_i}{\sum_{j=1}^{k} B_j} \right)\right)}.
\end{align*}
{By Assumption~\ref{regularity_condition}}, for any $i=1,\dots,k$, the limit
\begin{equation*}
    w_i \coloneqq \lim_{n \to \infty} \frac{B_i}{\sum_{j=1}^{k} B_j}
\end{equation*}
exists.
Therefore, {$\rho = \Theta(\xi)$, where $\xi$ is} the following convex combination of $w_i$, {defined in~\eqref{xi}}:
\begin{equation*}
    \xi \coloneqq \sum_{i=1}^{k} w_i \frac{1}{r_i^2}.
\end{equation*}
In particular, we have
\begin{equation*}
    \alpha_n = {\Theta\!\left(\frac{r_{\max}^6 \xi}{\sqrt{n\sigma^2}}\right)}.
\end{equation*}
This suggests choosing $K=\epsilon K_n$, where $\epsilon>0$ and
\begin{equation*}
    K_n= \frac{\sqrt{n\sigma^2}}{r_{\max}^6\xi}.
\end{equation*}
With this choice of $K$, and using~\eqref{boundK}, we obtain
\begin{equation} \label{finalboundK}
    \frac{r_{\max}^6}{n^{5/2}\sigma^3}   \sum_{l=1}^{M}\biggl[\mathbb{E}\!\left[|X_l|^3 \mathds{1}(|X_l| \leq \epsilon K_n) \right] + \mathbb{E}\!\left[|Z_l|^3 \mathds{1}(|Z_l| \leq \epsilon K_n) \right] \biggr] \leq 2 \epsilon K_n \alpha_n =  2 \epsilon \frac{\rho}{\xi}. 
\end{equation}
{Since $\rho=\Theta(\xi)$, there exists a constant $C >0$ and $n_0 \in \mathbb{N}$ such that, for all $n \geq n_0$, $\rho/\xi \leq C$. 
Setting $\tilde{C}\coloneqq 2C$ yields~\eqref{eq:boundThirdMom}, thereby completing the proof.}
\end{proof}

\subsubsection{Non-sparsity condition}
The connection between the non-sparsity condition and the Pastur-type condition is formalized in the following result.

\begin{proposition}[Pastur-type and non-sparsity conditions] \label{prop_D1_D2}
Assume that $1-p_i = \Theta(1)$ for all $1 \leq i \leq k$, and that the non-sparsity condition~\eqref{D} holds. 
Then both $\bm{X}$ and $\bm{Z}$ satisfy the Pastur-type condition~\eqref{P}.
\end{proposition}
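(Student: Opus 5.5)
The plan is to verify~\eqref{P} for $\bm{X}$ and for $\bm{Z}$ together, using a single fourth-moment (Markov-type) truncation bound. That bound reduces the left-hand side of~\eqref{P} to an expression in the average degrees, and the non-sparsity condition~\eqref{D} is then exactly what makes it negligible. The elementary inequality to use is
\begin{equation*}
\mathbb{E}\!\left[Y^2\mathds{1}(|Y|>t)\right]\le \frac{\mathbb{E}[Y^4]}{t^2}, \qquad t>0,
\end{equation*}
applied with $t=\epsilon K_n$.

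\textbf{Step 1: fourth moments.} For $|e_l|=r_i$ I will bound $\mathbb{E}[X_l^4]$ and $\mathbb{E}[Z_l^4]$ by a constant times $p_i$.
\begin{itemize}
\item Since $|X_l|\le 1$, we have $\mathbb{E}[X_l^4]\le \mathbb{E}[X_l^2]=\sigma_i^2\le p_i$.
\item For the Gaussian, $\mathbb{E}[Z_l^4]=3\sigma_i^4\le 3p_i$.
\end{itemize}
Summing over the $\binom{n}{r_i}$ hyperedges of each class and using $\binom{n}{r_i}p_i=\frac{n}{r_i}d_i$, which follows from~\eqref{defaveragedegrees}, gives
\begin{equation*}
\sum_{l=1}^M \mathbb{E}\!\left[X_l^2\mathds{1}(|X_l|>\epsilon K_n)\right]+\sum_{l=1}^M \mathbb{E}\!\left[Z_l^2\mathds{1}(|Z_l|>\epsilon K_n)\right]\le \frac{4n\,c_n}{\epsilon^2K_n^2}=\frac{4\,r_{\max}^{12}\xi^2\,c_n}{\epsilon^2\sigma^2}.
\end{equation*}
The last equality uses~\eqref{K_n}. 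This treats $\bm{X}$ and $\bm{Z}$ simultaneously, so it also explains why the two Pastur-type conditions reduce to the same requirement.

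\textbf{Step 2: express $\sigma^2$ through the degrees.} This is where the hypothesis $1-p_i=\Theta(1)$ enters. Since $\binom{n-2}{r_i-2}p_i=\frac{r_i-1}{n-1}d_i$, we get
\begin{equation*}
n\sigma^2=\frac{n}{n-1}\sum_{i=1}^k (r_i-1)\,d_i\,(1-p_i)=\Theta\Big(\sum_{i=1}^k r_i d_i\Big),
\end{equation*}
using $r_i-1=\Theta(r_i)$ for $r_i\ge 2$ and the fact that all terms are positive (see the footnote in the proof of Proposition~\ref{PasturGaussianization}). Consequently $n^2\sigma^4=\Theta(a_n)$.

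\textbf{Step 3: conclude.} By Step 1, the requirement~\eqref{P}, namely that the left-hand side is $o(n^2\sigma^2/r_{\max}^4)$, follows once
\begin{equation*}
\frac{r_{\max}^{12}\xi^2 c_n/\sigma^2}{n^2\sigma^2/r_{\max}^4}=\frac{r_{\max}^{16}\xi^2c_n}{n^2\sigma^4}=\Theta\!\left(\frac{b_nc_n}{a_n}\right)\longrightarrow 0,
\end{equation*}
and this is precisely~\eqref{D}. The factor $\epsilon^{-2}$ is a fixed constant for each $\epsilon$, so the $o(\cdot)$ statement holds for every $\epsilon>0$.

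I expect the main obstacle to be choosing the right moment bound. A cruder third-moment bound, or the observation that $\bm{X}$ is bounded (which would require $K_n\to\infty$), does not produce exactly the combination $a_n/(b_nc_n)$. The fourth-moment choice, combined with the bounds $\sigma_i^2\le p_i$ and $\sigma_i^4\le p_i$, is what matches the three quantities $a_n,b_n,c_n$ in Assumption~\ref{non_sparsity_condition}. Beyond that, the only care needed is in Step 2, where $1-p_i$ bounded away from zero is used to compare $\sigma^2$ with $\sum_i r_id_i$ from below.
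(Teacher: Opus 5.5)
Your proof is correct and is essentially the paper's argument. The paper handles $\bm{X}$ via $|X_l|\le 1$ and Chebyshev, and $\bm{Z}$ via Cauchy--Schwarz plus a fourth-moment Markov bound; both amount to your unified estimate $\mathbb{E}[Y^2\mathds{1}(|Y|>t)]\le \mathbb{E}[Y^4]/t^2$ with $\mathbb{E}[X_l^4]\le p_i$ and $\mathbb{E}[Z_l^4]\le 3p_i$, followed by the same lower bound $n\sigma^2\ge C\sum_i (r_i-1)d_i$.

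As a side note, your closing remark undersells the boundedness route for $\bm{X}$. Since $c_n\ge \sqrt{a_n}/r_{\max}^2$, condition \eqref{D} together with $n\sigma^2=\Theta(\sqrt{a_n})$ already forces $K_n\to\infty$, so the $\bm{X}$-sum in \eqref{P} is eventually identically zero.
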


We prove Proposition~\ref{prop_D1_D2} for $\bm{X}$ and $\bm{Z}$ separately in the following two lemmas.

\begin{lemma} \label{PasturX}
Assume that $1-p_i = \Theta(1)$ for all $1 \leq i \leq k$, and that the non-sparsity condition~\eqref{D} holds.
Then $\bm{X}$ satisfies the Pastur-type condition~\eqref{P}.
\end{lemma}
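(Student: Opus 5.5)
We want to show that, for every fixed $\epsilon>0$,
\[
\sum_{l=1}^M \mathbb{E}\bigl[X_l^2\mathds{1}(|X_l|>\epsilon K_n)\bigr]=o\bigl(n^2\sigma^2/r_{\max}^4\bigr).
\]
The key observation is that $X_l=h_l-p_i$ is bounded: $|X_l|\le 1$. So once $\epsilon K_n>1$, every indicator in the sum vanishes identically and the left-hand side is exactly zero for all large $n$. The whole proof therefore reduces to checking that the non-sparsity condition~\eqref{D} forces $K_n\to\infty$, i.e.
\[
\frac{n\sigma^2}{r_{\max}^{12}\,\xi^2}\to\infty .
\]

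\textbf{Step 1: express $n\sigma^2$ through the degrees.} Use the identity $\binom{n-2}{r_i-2}p_i=\frac{r_i-1}{n-1}d_i$ from Section~\ref{sec_regimes}, together with $\sigma_i^2=p_i(1-p_i)$ and the hypothesis $1-p_i=\Theta(1)$. This gives
\[
n\sigma^2=\Theta\Bigl(\sum_{i=1}^k r_i d_i\Bigr)=\Theta(\sqrt{a_n}),
\]
where we use $r_i-1=\Theta(r_i)$ since $r_i\ge 2$, and positivity of the summands to sum the $\Theta$-estimates as in the footnote of Proposition~\ref{PasturGaussianization}. Hence $K_n^2=\Theta\bigl(\sqrt{a_n}/(r_{\max}^{12}\xi^2)\bigr)$.

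\textbf{Step 2: compare with~\eqref{D}.} Condition~\eqref{D} reads $a_n/(r_{\max}^{16}\xi^2 c_n)\to\infty$. We want to deduce
\[
\frac{\sqrt{a_n}}{r_{\max}^{12}\xi^2}\to\infty .
\]
Dividing the quantity in~\eqref{D} by the target quantity gives the ratio
\[
\frac{\sqrt{a_n}}{r_{\max}^{4}\,c_n},
\]
so it suffices to show that this ratio is bounded above. Since $\sqrt{a_n}=\sum_i r_i d_i$ and $c_n=\sum_i d_i/r_i$, a term-by-term comparison gives
\[
\frac{\sqrt{a_n}}{c_n}\le \frac{\max_i r_i}{\min_i r_i^{-1}}\le r_{\max}^2,
\]
using $r_i d_i\le r_{\max}^2\,(d_i/r_i)$ for each $i$. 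Therefore
\[
\frac{\sqrt{a_n}}{r_{\max}^4 c_n}\le r_{\max}^{-2}\le 1 .
\]
Consequently the quantity in~\eqref{D} is at most a constant times $\sqrt{a_n}/(r_{\max}^{12}\xi^2)$. Since the former diverges, so does the latter, and hence $K_n\to\infty$.

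\textbf{Step 3: conclude.} For $n$ large enough that $\epsilon K_n>1$, every term in~\eqref{P} vanishes, and~\eqref{P} holds trivially.

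\textbf{Main obstacle.} The only delicate point is the bookkeeping in Step~2: matching the exponents of $r_{\max}$ and the powers of $\xi$, and making sure the $\Theta$-estimates in Step~1 are uniform even though the $r_i$ may grow with $n$. The bounds $r_i-1\ge r_i/2$ and $1-p_i=\Theta(1)$ make the constants in Step~1 independent of $n$. The comparison in Step~2 is crude, but it has slack (an extra factor $r_{\max}^{-2}$), so any minor loss in the constants is harmless.
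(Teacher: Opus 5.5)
Your proof is correct, but it takes a different route from the paper. The paper never uses that the truncation eventually becomes empty. It bounds $\mathbb{E}[\tilde X_i^2\mathds{1}(|\tilde X_i|>\epsilon K_n)]\le\mathbb{P}(|\tilde X_i|>\epsilon K_n)\le p_i/(\epsilon^2K_n^2)$ by Chebyshev. It then sums using $\binom{n}{r_i}p_i=\frac{n}{r_i}d_i$ together with the lower bound $n\sigma^2\ge C\sum_i(r_i-1)d_i$. This gives the quantitative estimate $\frac{r_{\max}^4}{n^2\sigma^2}\sum_l\mathbb{E}[X_l^2\mathds{1}(|X_l|>\epsilon K_n)]=O(b_nc_n/a_n)$, whose vanishing is exactly \eqref{D}.

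You instead use $|X_l|\le 1$, so the sum in \eqref{P} is identically zero once $\epsilon K_n>1$, and all you need is $K_n\to\infty$. Your Step~1 lower bound on $n\sigma^2$ is correct. Your Step~2 mediant bound $\sum_i r_id_i\le r_{\max}^2\sum_i d_i/r_i$ correctly shows that \eqref{D} forces $K_n\to\infty$, with slack of order $r_{\max}^2$. For example, in the uniform case $K_n^2\asymp d_1/r_1^7$, whereas \eqref{D} reads $d_1/r_1^9\to\infty$.

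Your argument is therefore shorter and shows that, for $\bm X$ alone, \eqref{D} is stronger than necessary. What the paper's Chebyshev-type route buys is uniformity with Lemma~\ref{PasturZ}. For the Gaussian vector $\bm Z$ the variables are unbounded, so the truncated second moments never vanish and a genuine tail estimate is required. The paper's two lemmas produce the same bound, which is why \eqref{D} appears as the common sufficient condition for Gaussianization.
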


\begin{proof}
Since the vector $\bm{X}$ consists of $k$ groups of i.i.d.\ random variables (see~\eqref{X}), we have
\begin{equation*}
\sum_{l=1}^{M} \mathbb{E}\!\left[
            X_l^2 \,\mathds{1}\!\left(|X_l| > \epsilon K_n\right)
        \right] = \sum_{i=1}^{k} \binom{n}{r_i} \mathbb{E}\!\left[
            \Tilde{X}_i^2 \,\mathds{1}\!\left(|\Tilde{X}_i| > \epsilon K_n\right)
        \right],
\end{equation*}
where $\Tilde{X}_i + p_i \sim \mathrm{Ber}(p_i)$ for $1 \leq i \leq k$.
Therefore, we need to show that, for all $\epsilon>0$,
\begin{equation}
\frac{r_{\max}^4}{n^2 \sigma^2} \,  \sum_{i=1}^{k} \binom{n}{r_i} \mathbb{E}\!\left[\Tilde{X}_i^2 \mathds{1}\!\left(|\Tilde{X}_i|>\epsilon K_n\right) \right] \xrightarrow{n\to\infty} 0.
\end{equation}
Fix $i \in \{1,\dots,k\}$ and note that
\begin{equation*}
    |\Tilde{X}_i| \leq \max\{p_i,1-p_i\} \leq 1.
\end{equation*}
Using Chebyshev's inequality, the fact that $\mathbb{E}\big[\Tilde{X}_i^2 \big] = \mathrm{Var}\big[\Tilde{X}_i\big] = p_i(1-p_i) \leq p_i$, and the definition~\eqref{K_n} of $K_n$, we thus obtain
\begin{equation*}
    \mathbb{E}\!\left[\Tilde{X}_i^2 \mathds{1}\!\left(|\Tilde{X}_i|>\epsilon K_n\right)\right]
    \le \mathbb{P}\left(|\Tilde{X}_i|>\epsilon K_n\right)
    \leq 
    \frac{\mathbb{E}\big[\Tilde{X}_i^2 \big]}{\epsilon^2 K_n^2}
    \leq \frac{r_{\max}^{12} \, \xi^2 p_i}{\epsilon^2 n \sigma^2}.
\end{equation*}

By assumption we have $1-p_i = \Theta(1)$ for all $1 \leq i \leq k$, hence there exists $C\in (0,\infty)$ such that $1-p_i\geq C$ for all $1 \leq i \leq k$.
Moreover, by the definition of $\sigma^2$ in~\eqref{sigmasquare} and of the average degree $d_i$ in~\eqref{defaveragedegrees}, we have
\begin{align*}
    \sigma^2 \coloneqq \sum_{i=1}^{k} \binom{n-2}{r_i-2}\sigma_i^2 
     = \sum_{i=1}^{k} \frac{r_i-1}{n-1}d_i(1-p_i).
\end{align*}
Therefore,
\begin{equation}
    \label{eq:boundC}
n \sigma^2
=n \sum_{i=1}^{k} \frac{r_i-1}{n-1}d_i(1-p_i)
\geq C \sum_{i=1}^k (r_i-1)d_i.
\end{equation}
Observing now that 
\begin{equation*}
   \binom{n}{r_i} p_i = \frac{n}{r_i} d_i
\end{equation*}
and putting everything together, we obtain
\begin{equation*}
\frac{r_{\max}^4}{n^2 \sigma^2}
\sum_{i=1}^{k} \binom{n}{r_i}
\mathbb{E}\!\left[
    \tilde{X}_i^2 \,\mathds{1}\!\left(|\tilde{X}_i|>\epsilon K_n\right)
\right]
\leq \frac{r_{\max}^{16} \, \xi^2}{\epsilon^2 n^2 \sigma^4} \sum_{i=1}^k \frac{d_i}{r_i}
\leq 
\frac{r_{\max}^{16}\,\xi^2 \sum_{i=1}^{k} \frac{d_i}{r_i}}
{\epsilon^2 C^2 \big(\sum_{i=1}^{k} (r_i-1) d_i\big)^2}.
\end{equation*}
The denominator in the latter ratio is $\Theta\big(\sum_{i=1}^{k} r_i d_i\big)^2$, and hence
\begin{equation*}
\frac{r_{\max}^4}{n^2 \sigma^2}
\sum_{i=1}^{k} \binom{n}{r_i}
\mathbb{E}\!\left[
    \tilde{X}_i^2 \,\mathds{1}\!\left(|\tilde{X}_i|>\epsilon K_n\right)
\right]
= O\!\left(\frac{r_{\max}^{16}\,\xi^2 \sum_{i=1}^{k} \frac{d_i}{r_i}}
{\big(\sum_{i=1}^{k} r_i d_i\big)^2}\right).
\end{equation*}
Now, if the non-sparsity condition~\eqref{D} holds, then the right-hand side above vanishes.
Hence, $\bm{X}$ satisfies the Pastur-type condition~\eqref{P}.
\end{proof}

\begin{lemma} \label{PasturZ}
Assume that $1-p_i = \Theta(1)$ for all $1 \leq i \leq k$, and that the non-sparsity condition~\eqref{D} holds. 
Then $\bm{Z}$ satisfies the Pastur-type condition~\eqref{P}.
\end{lemma}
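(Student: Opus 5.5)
The plan is to mirror the proof of Lemma~\ref{PasturX}, with the Gaussian vector $\bm{Z}$ in place of $\bm{X}$. Since $\bm{Z}$ consists of $k$ groups of i.i.d.\ variables, we have
\begin{equation*}
\sum_{l=1}^{M}\mathbb{E}\!\left[Z_l^2\mathds{1}(|Z_l|>\epsilon K_n)\right]=\sum_{i=1}^{k}\binom{n}{r_i}\mathbb{E}\!\left[\tilde Z_i^2\mathds{1}(|\tilde Z_i|>\epsilon K_n)\right],
\qquad \tilde Z_i\sim\mathcal{N}(0,\sigma_i^2).
\end{equation*}
It therefore suffices to show that $\frac{r_{\max}^4}{n^2\sigma^2}$ times this sum tends to $0$.

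The only new ingredient is a bound on the truncated second moment of a Gaussian. The Bernoulli case used $|\tilde X_i|\le 1$, which is no longer available. I would instead use Cauchy--Schwarz and Chebyshev:
\begin{equation*}
\mathbb{E}\!\left[\tilde Z_i^2\mathds{1}(|\tilde Z_i|>\epsilon K_n)\right]\le \sqrt{\mathbb{E}[\tilde Z_i^4]}\,\sqrt{\mathbb{P}(|\tilde Z_i|>\epsilon K_n)}\le \sqrt{3}\,\sigma_i^2\cdot\frac{\sigma_i}{\epsilon K_n}.
\end{equation*}
A cleaner alternative is the inequality $\mathbb{E}[Y^2\mathds{1}(|Y|>K)]\le \mathbb{E}[Y^4]/K^2$, which gives
\begin{equation*}
\mathbb{E}\!\left[\tilde Z_i^2\mathds{1}(|\tilde Z_i|>\epsilon K_n)\right]\le \frac{3\sigma_i^4}{\epsilon^2K_n^2}\le \frac{3 p_i\, r_{\max}^{12}\xi^2}{\epsilon^2 n\sigma^2},
\end{equation*}
using $\sigma_i^4\le \sigma_i^2\le p_i$. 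This is exactly the bound obtained for $\tilde X_i$ in the proof of Lemma~\ref{PasturX}, up to the constant $3$, so I choose this route.

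From here the argument repeats Lemma~\ref{PasturX} verbatim. We use $\binom{n}{r_i}p_i=\frac{n}{r_i}d_i$, and the lower bound \eqref{eq:boundC}, $n\sigma^2\ge C\sum_i(r_i-1)d_i$, which comes from $1-p_i=\Theta(1)$. Together these bound the normalized sum by
\begin{equation*}
O\!\left(\frac{r_{\max}^{16}\xi^2\sum_i d_i/r_i}{(\sum_i r_id_i)^2}\right)=O\!\left(\frac{b_nc_n}{a_n}\right),
\end{equation*}
which tends to $0$ by the non-sparsity condition~\eqref{D}.

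The only step needing care is the Gaussian tail bound, since the variables are now unbounded. The fourth-moment Markov-type estimate removes this difficulty, and it explains why the resulting condition coincides with the one for $\bm{X}$. One point to check is that $(r_i-1)$ and $r_i$ are comparable, which holds since $r_i\ge 2$; this gives $\sum_i(r_i-1)d_i=\Theta(\sum_i r_id_i)$.
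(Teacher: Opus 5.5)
Your proposal is correct and is essentially the paper's argument. The paper reaches the same bound $\mathbb{E}|\tilde Z_i|^4/(\epsilon K_n)^2 = m_4\sigma_i^4/(\epsilon^2 K_n^2)$ via Cauchy--Schwarz followed by Markov's inequality with the fourth moment, which you obtain in one step from the pointwise inequality $Y^2\mathds{1}(|Y|>K)\le Y^4/K^2$; the remaining steps (bounding $\binom{n}{r_i}\sigma_i^4\le \frac{n}{r_i}d_i$ and invoking \eqref{eq:boundC}) coincide with the paper's reduction to the non-sparsity condition~\eqref{D}.
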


\begin{proof}
As in the proof of Lemma~\ref{PasturX}, note that
\begin{equation*}
        \sum_{l=1}^{M} \mathbb{E}\!\left[
        Z_l^2 \,\mathds{1}\!\left(|Z_l| > \epsilon K_n\right)
        \right] 
        = \sum_{i=1}^{k} \binom{n}{r_i} \mathbb{E}\!\left[
        \Tilde{Z}_i^2 \,\mathds{1}\!\left(|\Tilde{Z}_i| > \epsilon K_n\right)
        \right],
\end{equation*}
where $\Tilde{Z}_i \sim \mathcal{N}(0,\sigma_i^2)$ for $1 \leq i \leq k$.
Therefore, we need to show that, for all $\epsilon>0$,
\begin{equation} \label{gaussiane_pastur}
\frac{r_{\max}^4}{n^2 \sigma^2} \,  \sum_{i=1}^{k} \binom{n}{r_i} \mathbb{E}\!\left[\Tilde{Z}_i^2 \,\mathds{1}\!\left(|\Tilde{Z}_i|>\epsilon K_n\right) \right] \xrightarrow{n\to\infty} 0.
\end{equation}
Fix $i \in \{1,\dots,k\}$ and note that
\begin{align*}
   \mathbb{E}\!\left[ \Tilde{Z}_i^2 \mathds{1}\!\left(|\Tilde{Z}_i|>\epsilon K_n\right) \right]
   &\le \left(\mathbb{E}|\Tilde{Z}_i|^4\right)^{1/2}
   \mathbb{P}\left(|\Tilde{Z}_i|>\epsilon K_n\right)^{1/2} \notag\\
   &\le \left(\mathbb{E}|\Tilde{Z}_i|^4\right)^{1/2} 
   \left(\mathbb{E}|\Tilde{Z}_i|^{4}\right)^{1/2} 
   (\epsilon K_n)^{-2} \notag\\
   &  = \frac{\mathbb{E}|\Tilde{Z}_i|^{4}}{\epsilon^{2}}
    \frac{(r_{\max}^6 \xi)^{2}}{n\sigma^2},
\end{align*}
where we used the Cauchy--Schwarz inequality for the first inequality, Markov's inequality for the second one, and the definition of $K_n$ in~\eqref{K_n} for the final equality.
Note that $\mathbb{E}|\Tilde{Z}_i|^4 = \sigma_i^{4}m_4$, where $m_4$ is the fourth moment of a standard normal random variable; {in particular, $m_4$ is finite and does not depend on $n$.}
Inserting the above estimate, we obtain
\[
\frac{r_{\max}^4}{n^2 \sigma^2} \,  \sum_{i=1}^{k} \binom{n}{r_i} \mathbb{E}\!\left[\Tilde{Z}_i^2 \mathds{1}\!\left(|\Tilde{Z}_i|>\epsilon K_n\right) \right]
\leq \frac{r_{\max}^{16} \, \xi^{2} m_4 \sum_{i=1}^{k} \binom{n}{r_i} \sigma_i^{4}}{\epsilon^2 n (n\sigma^2)^{2}}.
\]
Observe that    
\begin{align*}
    \binom{n}{r_i} \sigma_i^4
    = \frac{n}{r_i} d_i (1-p_i) \sigma_i^2
    \leq \frac{n}{r_i} d_i,
\end{align*}
since $1-p_i\leq 1$ and $\sigma^2_i=p_i(1-p_1)\leq 1$.
Combining this estimate with~\eqref{eq:boundC}, we obtain
\[
\frac{r_{\max}^4}{n^2 \sigma^2} \sum_{i=1}^{k} \binom{n}{r_i} \mathbb{E}\!\left[\Tilde{Z}_i^2 \mathds{1}\!\left(|\Tilde{Z}_i|>\epsilon K_n\right) \right]
\leq 
\frac{r_{\max}^{16}\,\xi^2 m_4 \sum_{i=1}^{k} \frac{d_i}{r_i}}
{\epsilon^2 C^2 \big(\sum_{i=1}^{k} (r_i-1) d_i\big)^2}.
\]
We now conclude the proof by repeating the argument used at the end of the proof of Lemma~\ref{PasturX}.
\end{proof}

\subsection{Convergence to the semicircle law: proof of Theorem~\ref{thm_semicircle}} \label{secsemicircle}
This subsection is dedicated to the proof of Theorem~\ref{thm_semicircle}.
{By Propositions~\ref{PasturGaussianization} and~\ref{prop_D1_D2}, it suffices to study the convergence to the semicircle law in the Gaussianized model.}
This is established in the result below.

The proof of the convergence to a semicircle law follows exactly the same argument of~\cite{ipergrafiuniformi} for uniform hypergraphs.
We outline their argument by briefly describing the structure of the random matrix $H_n(\bm{Z})$.
As for $H_n(\bm{X})$, each off-diagonal entry has mean zero and variance $1/n$; moreover, being a sum of independent Gaussian random variables, it is itself Gaussian, with distribution $\mathcal{N}(0,1/n)$.
How does then $H_n(\bm{Z})$ differ from a classical GOE matrix?
There are two main differences: firstly, the diagonal entries of $H_n$ are zero, whereas in a GOE matrix they are distributed as $\mathcal{N}(0,2/n)$; secondly, the entries of $H_n(\bm{Z})$ are correlated, whereas in a GOE matrix they are independent (up to symmetry).
Nevertheless, following the approach of~\cite{ipergrafiuniformi}, one can represent $H_n(\bm{Z})$ as a finite-rank perturbation of a suitably scaled GOE matrix with its diagonal removed.
From this, convergence to the semicircle law follows by using standard inequalities involving distances between real measures.

However, we still need an \emph{ad hoc} computation for the variance of the limiting semicircle law in the non-uniform case; see the proof below.

\begin{proposition}[ELSD, Gaussian case] \label{prop_Gaussian_case}
Suppose $r_i/n \rightarrow c_i \in [0,1)$, for $1 \leq i \leq k$.
Then, as $n \to \infty$,
\begin{equation*}
    \bar{\mu}_{H_n(\bm{Z})} \xlongrightarrow{n \to \infty} \nu_{s^2} 
    \qquad{\text{weakly}},
\end{equation*}
where the variance $s^2$ is defined in~\eqref{variance_semicircle}.
\end{proposition}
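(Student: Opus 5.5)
We would construct an explicit Gaussian coupling. The plan is to write $H_n(\bm{Z})$, exactly in distribution, as a scaled GOE matrix with zero diagonal plus a perturbation of bounded rank plus a diagonal perturbation that is negligible in normalized Frobenius norm. The semicircle law for the first piece then transfers to $H_n(\bm{Z})$ by standard rank and Frobenius perturbation inequalities for spectral measures.

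\textbf{Covariance structure.} The matrix $H_n(\bm{Z})=\sum_{i=1}^k G^{(i)}$, with $G^{(i)}\coloneqq(n\sigma^2)^{-1/2}\sum_{|e_l|=r_i} Z_l Q_l$, is a sum of $k$ independent centered Gaussian symmetric matrices with zero diagonal. By the same counting that gives~\eqref{covariance}, the off-diagonal entries of $G^{(i)}$ are jointly Gaussian with covariance $\gamma_i,\beta_i,\alpha_i$ (each divided by $n\sigma^2$) according to whether $|\{u,v,u',v'\}|=2,3,4$. Here
\[
\gamma_i=\binom{n-2}{r_i-2}\sigma_i^2,\qquad \beta_i=\binom{n-3}{r_i-3}\sigma_i^2,\qquad \alpha_i=\binom{n-4}{r_i-4}\sigma_i^2 .
\]
We would check that $\beta_i/\gamma_i=\frac{r_i-2}{n-2}$ and $\alpha_i/\gamma_i=\frac{(r_i-2)(r_i-3)}{(n-2)(n-3)}$, so that $0\le\alpha_i\le\beta_i$ and $\gamma_i-2\beta_i+\alpha_i\ge0$.

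\textbf{Representation.} Take independent $W$ (GOE-type, off-diagonal entries $\mathcal N(0,1/n)$), $\bm{y}^{(i)}\in\R^n$ with i.i.d.\ $\mathcal N(0,1)$ entries, and scalars $x^{(i)}\sim\mathcal N(0,1)$. For $u\neq v$, set
\[
G^{(i)}_{uv} \overset{d}{=} \sqrt{\tfrac{\gamma_i-2\beta_i+\alpha_i}{\sigma^2}}\,W^{(i)}_{uv}+\sqrt{\tfrac{\beta_i-\alpha_i}{n\sigma^2}}\,\bigl(y^{(i)}_u+y^{(i)}_v\bigr)+\sqrt{\tfrac{\alpha_i}{n\sigma^2}}\,x^{(i)} .
\]
A direct check shows the three covariances match. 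Summing over $i$ and merging the independent GOE parts gives
\[
H_n(\bm{Z})\overset{d}{=}s_n\,\widetilde W + P_n - D_n ,\qquad s_n^2\coloneqq\sum_{i=1}^k\frac{\gamma_i-2\beta_i+\alpha_i}{\sigma^2}.
\]
Here $\widetilde W$ is a GOE matrix with its diagonal removed. $P_n$ is a sum of the matrices $\bm y^{(i)}\mathbf 1^T+\mathbf 1\bm y^{(i)T}$ and $x^{(i)}J$, so it has rank at most $3k$. $D_n$ is the diagonal correction that restores zero diagonal.

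\textbf{Perturbation step.} The rank inequality gives $\|F_{A+P}-F_A\|_\infty\le \mathrm{rank}(P)/n$, which disposes of $P_n$ since $k$ is fixed. For $D_n$, the entries have variance $O\bigl(\sum_i\beta_i/(n\sigma^2)\bigr)=O(1/n)$, because $\beta_i\le\gamma_i$ and $\sum_i\gamma_i=\sigma^2$. Hence $\frac1n\mathbb E\|D_n\|_F^2=O(1/n)\to0$. By Hoffman--Wielandt, this gives $\mathbb E[d_L^3(\mu_{A+D},\mu_A)]\le\frac1n\mathbb E\|D\|_F^2$ for the L\'evy distance. Removing the diagonal of the GOE is handled the same way. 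Since $\bar\mu_{\widetilde W}\to\nu_1$, we obtain $\bar\mu_{H_n(\bm Z)}\to\nu_{s^2}$ provided $s_n^2\to s^2$.

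\textbf{Variance computation (the ad hoc step).} From the ratios above,
\[
\frac{\gamma_i-2\beta_i+\alpha_i}{\gamma_i}=1-2\frac{r_i-2}{n-2}+\frac{(r_i-2)(r_i-3)}{(n-2)(n-3)}\to(1-c_i)^2 .
\]
Since $\gamma_i/\sigma^2\to w_i$ by Assumption~\ref{regularity_condition}, it follows that $s_n^2\to\sum_i w_i(1-c_i)^2=s^2$. The only delicate point here is that $r_i$ may be small, e.g.\ $r_i=2$ or $3$, where $\beta_i$ or $\alpha_i$ vanish; the formulas remain valid in those cases.

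The main obstacle I anticipate is making the distributional identity rigorous with correct bookkeeping. One must verify nonnegativity of all the coefficients, and confirm that the diagonal corrections are small uniformly in the regime $c_i>0$, where $\beta_i$ is comparable to $\gamma_i$. The bound $\beta_i\le\gamma_i$ above should handle the latter.
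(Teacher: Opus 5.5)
Your proposal is correct and is essentially the paper's argument: the paper defers to Mukherjee et al.\ for exactly this representation of $H_n(\bm{Z})$ as a scaled zero-diagonal GOE matrix plus a finite-rank perturbation and a negligible diagonal correction, and then carries out the same variance computation. Your per-class bookkeeping only refines this. Summing over $i$, your $s_n^2$ equals the paper's $\theta_n^2=1-2\gamma_n+\rho_n$, where the paper's $\gamma_n,\rho_n$ are the aggregated three- and four-index covariances, not your $\gamma_i$. The nonnegativity you flag is immediate from $\gamma_i-2\beta_i+\alpha_i=\gamma_i\frac{(n-r_i)(n-r_i-1)}{(n-2)(n-3)}$.
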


\begin{proof}
The proof of the convergence to a semicircle law is the same as in~\cite{ipergrafiuniformi}.
It remains to calculate the explicit expression of the variance $s^2$ of the limiting semicircle law in the non-uniform case.
This depends on the covariance structure of $H_n$ as follows:
\begin{equation} \label{theta_n}
   {s^2 = \lim_{n \rightarrow \infty} \theta_n^2, \qquad \theta_n^2 
   \coloneqq 1-2\gamma_n+\rho_n,}
\end{equation} 
where
\begin{equation*}
    \mathrm{Cov}((H_n)_{uv},(H_n)_{u'v'}) = 
        \begin{cases}
            \rho_n & \text{if } |\{u,v,u',v'\}|=4, \\[5pt]
            \gamma_n & \text{if } |\{u,v,u',v'\}|=3.
    \end{cases}
\end{equation*}
By~\eqref{covariance} and~\eqref{def_H}, we have
\begin{equation*}
    \rho_n = \frac{\sum_{i=1}^{k} \binom{n-4}{r_i-4} \sigma_i^2}{\sum_{i=1}^{k} \binom{n-2}{r_i-2} \sigma_i^2}, 
    \qquad 
    \gamma_n = \frac{\sum_{i=1}^{k} \binom{n-3}{r_i-3} \sigma_i^2}{\sum_{i=1}^{k} \binom{n-2}{r_i-2} \sigma_i^2}. 
\end{equation*}
To compute the limits of $\rho_n$ and $\gamma_n$, we introduce some shorthand notation.
For $i=1,\dots,k$, let 
\begin{equation*}
    {\zeta_i} \coloneqq \frac{(r_i-2)(r_i-3)}{(n-2)(n-3)}, \qquad
    {\eta_i} \coloneqq \frac{r_i-2}{n-2}, \qquad
    B_i \coloneqq \binom{n-2}{r_i-2} \sigma_i^2,
\end{equation*}
where $B_i$ was already defined in~\eqref{shorthand}.
Then 
\begin{align*}
    \rho_n = \frac{\sum_{i=1}^{k} \zeta_i B_i}{\sum_{i=1}^{k} B_i} = \sum_{i=1}^{k} \left(\frac{B_i}{\sum_{j=1}^{k} B_j} \right) \zeta_i.
\end{align*}
Since $\zeta_i \rightarrow c_i^2$ as $n \to \infty$ for all $i$, it follows that
\begin{equation} \label{lim_rho}
   \lim_{n \to \infty} \rho_n = \sum_{i=1}^{k} w_i c_i^2, 
\end{equation}
where the coefficients
\begin{equation*}
   w_i = \lim_{n \rightarrow \infty} \frac{B_i}{\sum_{j=1}^{k} B_j}, \qquad 1 \leq i \leq k, 
\end{equation*}
are defined as in~\eqref{weights}.
In a similar manner, 
\begin{equation*}
    \gamma_n = \frac{\sum_{i=1}^{k} \eta_iB_i}{\sum_{i=1}^{k} B_i}.
\end{equation*}
Since $\eta_i \rightarrow c_i$ as $n \to \infty$ for all $i$, it follows that
\begin{equation} \label{lim_gamma}
    \lim_{n \to \infty} \gamma_n = \sum_{i=1}^{k} w_i c_i.
\end{equation}
Consequently, combining~\eqref{theta_n}, \eqref{lim_rho}, and \eqref{lim_gamma}, and using that $\sum_{i=1}^{k} w_i=1$, we obtain
\begin{equation*}
s^2
= \lim_{n \to \infty} \theta^2_n  
= \lim_{n \to \infty} (1 - 2  \gamma_n +  \rho_n) 
= \sum_{i=1}^{k} w_i(1-c_i)^2,
\end{equation*}
as desired.
\end{proof}

\printbibliography

\end{document}